\newcommand{\pp}{\mathbb{P}}
\newcommand{\ee}{\mathbb{E}\,}
\newcommand{\ii}{\mathcal{I}}
\newcommand{\rr}{\mathbb{R}}
\newcommand{\ch}{\mathcal{H}}
\renewenvironment{proof}[1][\proofname]{\par \normalfont \trivlist
\item[\hskip\labelsep\itshape #1]\ignorespaces
}{%
\hspace*{\fill}$\Box$ \endtrivlist }
\renewcommand{\proofname}{{\bf Proof}}
\def\newrmtheorem#1{\@ifnextchar[{\@rmothm{#1}}{\@rmnthm{#1}}}
\def\@rmnthm#1#2{%
\@ifnextchar[{\@rmxnthm{#1}{#2}}{\@rmynthm{#1}{#2}}}
\def\@rmxnthm#1#2[#3]{\expandafter\@ifdefinable\csname #1\endcsname
{\@definecounter{#1}\@addtoreset{#1}{#3}%
\expandafter\xdef\csname the#1\endcsname{\expandafter\noexpand
  \csname the#3\endcsname \@rmthmcountersep \@rmthmcounter{#1}}%
\global\@namedef{#1}{\@rmthm{#1}{#2}}\global\@namedef{end#1}{\@endrmtheorem}}}
\def\@rmynthm#1#2{\expandafter\@ifdefinable\csname #1\endcsname
{\@definecounter{#1}%
\expandafter\xdef\csname the#1\endcsname{\@rmthmcounter{#1}}%
\global\@namedef{#1}{\@rmthm{#1}{#2}}\global\@namedef{end#1}{\@endrmtheorem}}}
\def\@rmothm#1[#2]#3{\expandafter\@ifdefinable\csname #1\endcsname
  {\global\@namedef{the#1}{\@nameuse{the#2}}%
\global\@namedef{#1}{\@rmthm{#2}{#3}}%
\global\@namedef{end#1}{\@endrmtheorem}}}
\def\@rmthm#1#2{\refstepcounter
    {#1}\@ifnextchar[{\@rmythm{#1}{#2}}{\@rmxthm{#1}{#2}}}
\def\@rmxthm#1#2{\@beginrmtheorem{#2}{\csname the#1\endcsname}\ignorespaces}
\def\@rmythm#1#2[#3]{\@opargbeginrmtheorem{#2}{\csname
       the#1\endcsname}{#3}\ignorespaces}
\def\@rmthmcounter#1{\noexpand\arabic{#1}}
\def\@rmthmcountersep{}
\def\@beginrmtheorem#1#2{\rm \trivlist
      \item[\hskip \labelsep{\bf #1\ #2\thmrmcounterend}]}
\def\@opargbeginrmtheorem#1#2#3{\rm \trivlist
      \item[\hskip \labelsep{\bf #1\ #2\ (#3)\thmrmcounterend}]}
\def\@endrmtheorem{\endtrivlist}
\def\thmrmcounterend{\hskip 0em\relax}
\def\newrmwntheorem#1#2{\expandafter\@ifdefinable\csname #1\endcsname%
\global\@namedef{#1}{\@rmwnthm{#1}{#2}}%
\global\@namedef{end#1}{\@endrmwntheorem}}
\def\newsltheorem#1{\@ifnextchar[{\@slothm{#1}}{\@slnthm{#1}}}
\def\@slnthm#1#2{%
\@ifnextchar[{\@slxnthm{#1}{#2}}{\@slynthm{#1}{#2}}}
\def\@slxnthm#1#2[#3]{\expandafter\@ifdefinable\csname #1\endcsname
{\@definecounter{#1}\@addtoreset{#1}{#3}%
\expandafter\xdef\csname the#1\endcsname{\expandafter\noexpand
  \csname the#3\endcsname \@slthmcountersep \@slthmcounter{#1}}%
\global\@namedef{#1}{\@slthm{#1}{#2}}\global\@namedef{end#1}{\@endsltheorem}}}
\def\@slynthm#1#2{\expandafter\@ifdefinable\csname #1\endcsname
{\@definecounter{#1}%
\expandafter\xdef\csname the#1\endcsname{\@slthmcounter{#1}}%
\global\@namedef{#1}{\@slthm{#1}{#2}}\global\@namedef{end#1}{\@endsltheorem}}}
\def\@slothm#1[#2]#3{\expandafter\@ifdefinable\csname #1\endcsname
  {\global\@namedef{the#1}{\@nameuse{the#2}}%
\global\@namedef{#1}{\@slthm{#2}{#3}}%
\global\@namedef{end#1}{\@endsltheorem}}}
\def\@slthm#1#2{\refstepcounter
    {#1}\@ifnextchar[{\@slythm{#1}{#2}}{\@slxthm{#1}{#2}}}
\def\@slxthm#1#2{\@beginsltheorem{#2}{\csname the#1\endcsname}\ignorespaces}
\def\@slythm#1#2[#3]{\@opargbeginsltheorem{#2}{\csname
       the#1\endcsname}{#3}\ignorespaces}
\def\@slthmcounter#1{.\noexpand\arabic{#1}}
\def\@slthmcountersep{}
\def\@beginsltheorem#1#2{\sl \trivlist
      \item[\hskip \labelsep{\bf #1\ #2\thmslcounterend}]}
\def\@opargbeginsltheorem#1#2#3{\sl \trivlist
      \item[\hskip \labelsep{\bf #1\ #2\ (#3)\thmslcounterend}]}
\def\@endsltheorem{\endtrivlist}
\def\thmslcounterend{\hskip 0em\relax}
\def\newslwntheorem#1#2{\expandafter\@ifdefinable\csname #1\endcsname%
\global\@namedef{#1}{\@slwnthm{#1}{#2}}%
\global\@namedef{end#1}{\@endslwntheorem}}
\begin{document}

\providecommand{\keywords}[1]
{
  \small	
  \textsl{Keywords:} #1
}

\providecommand{\ams}[1]
{
  \small	
  \textsl{AMS subject classification:} #1
}

\title{Approximation of nonnegative systems by moving averages of fixed order}%

\author{Lorenzo Finesso\thanks{Lorenzo Finesso is with the Institute of Electronics, Computer, and Telecommunication Engineering,
National Research Council, CNR-IEIIT, Padova; email: {\tt lorenzo.finesso@ieiit.cnr.it}} \and
        Peter Spreij\thanks{Peter Spreij is with the Korteweg-de Vries Institute for Mathematics,
Universiteit van Amsterdam and with the  Institute for Mathematics, Astrophysics and Particle Physics, Radboud University, Nijmegen; e-mail: {\tt spreij@uva.nl}}
}

\maketitle

\begin{abstract}
We pose the  approximation problem for scalar nonnegative input/\-out\-put systems via impulse response convolutions of finite order, i.e.\ finite order moving averages, based on repeated observations of input/output signal pairs. The problem is  converted into a nonnegative matrix factorization with special structure for which we use Csisz\'ar's I-divergence as the criterion of optimality. Conditions are given, on the input/output data, that guarantee the existence and uniqueness of the minimum. We propose an algorithm of the alternating minimization type for I-divergence minimization, and present its asymptotic behavior. For the case of noisy observations we give the large sample properties of the statistical version of the minimization problem for different observation regimes. Numerical experiments confirm the asymptotic results and exhibit fast convergence of the proposed algorithm.

\keywords{moving average, finite order, positive system, alternating minimization}

\ams{62B10, 62E20, 94A17, 93B30, 93E12}

\end{abstract}

\section{Introduction}\label{section:intro}

%
In this paper we pose the problem of the time-domain approximation of nonnegative input/output systems by finite (nonnegative) impulse response convolutions of fixed order $q$,  when input/output observations are available. In principle, the order $q$ ia a low number compared to the number of observations,
We propose an iterative algorithm to find the best approximation, and study the asymptotical behavior of the algorithm. The present paper is a variation on and complements \cite{fs2015}, where the order of the convolution was not fixed, but varies with the sample size. Contrary to contributions prior to, but in line with \cite{fs2015}, our treatment allows for $m>1$  input/\-output pairs. This setting leads easily to a statistical analysis when the output is observed with noise. We then study large sample properties of the resulting parameter estimators
when (1) the number of input/\-output pairs $m$ grows unboundedly but the time horizon is fixed, (2) the number of observations, the time horizon, $N$, tends to infinity, but $m$ is fixed and (3) a mixture of the previous two cases. It is noted that the last two cases are not meaningful when the order of the convolution is not fixed, as in \cite{fs2015}. Indeed, fixing the order $q$ is the main difference with our earlier contributions.
Similar algorithms to ours for the case $m=1$ has been studied in~\cite{snyderetal1992} and \cite{Vardietal1985}. Following the choice made in those early contributions our criterion of optimality will be Csisz\'ar's I-divergence, which as argued in~\cite{cs1991} (see also \cite{snyderetal1992}), is the best choice for approximation problems under nonnegativity constraints.

We emphasize that our approach to the approximation of a given input/\-out\-put system by a linear time invariant system is different from the usual identification or realization of (nonnegative) linear systems, see~\cite{benvenutifarina2004} for a survey, and for instance~\cite{nagy2007}, \cite{nagy2005}, \cite{gurvits2007}, \cite{shu2008}, \cite{andersondeistler1996}, \cite{farina1995}. From the mathematical point of view, the techniques that we have used in~\cite{fs2006} to analyse a nonnegative matrix factorization algorithm have been shown to be very useful in the present context as well, as demonstrated in \cite{fs2015},  and provided several benefits over the analyses contained in~\cite{snyderetal1992}. We will provide explicit conditions for the existence and uniqueness of the minimizer of the criterion  in terms of the data. The algorithm that minimizes the informational divergence criterion is of the same alternating minimization type as in \cite{fs2015}, and the optimality conditions (the Pythagorean relations) are shown satisfied at each step. As demonstrated in \cite{fs2015}, these are the core of a proof of convergence which is more transparent than other proofs in the literature, e.g.~\cite{cover1984}, \cite{snyderetal1992}, and \cite{Vardietal1985}.

A different theoretical approach, in the frequency domain, has been followed in~\cite{liubauer2010a} and in~\cite{liubauer2010b}. The contributions of the present paper are theoretical, possible applications of the algorithm are e.g.\ in the fields of image processing, emission tomography, industrial processes,  charge routing networks, compartmental systems, storage systems. For these we refer for instance to~\cite{dewasurendra2007,snyderetal1992,sullivan2007,Vardietal1985,farinarinaldibook} and references therein.

A brief summary of the paper follows. In Section~\ref{section: problem} we state the problem and formulate conditions for strict convexity of the objective function, and hence for the existence and uniqueness of the solution. In Section~\ref{sec:lift} the original problem is lifted into a higher dimensional setting, thus making it amenable to alternating minimization. The optimality properties (Pythagoras rules) of the ensuing partial minimization problems are recalled here. Then we derive the iterative minimization algorithm combining the solutions of the partial minimizations, we present its first properties and the important result on the convergence of the algorithm.
In Section~\ref{section:stats}, taking advantage of the repeated input/output measurements setup or the possibility of a growing time horizon, we give a concise treatment of a statistical version of the approximation problem, focusing on its large sample properties. In the last Section~\ref{section:numerics} we present numerical experiments that confirm the asymptotic results and exhibit fast convergence properties of the algorithm.

As mentioned above, the present paper is a follow up to \cite{fs2015}, where the dimension of the parameter was not fixed, but varies with the sample size. Accordingly we often only highlight the differences and refer, unless extra or different arguments are needed, to \cite{fs2015} for proofs. Main differences with \cite{fs2015} are in Section~\ref{section:stats}, where we discuss asymptotic properties of estimators under different observation regimes (including those where the time horizon tends to infinity), which can now be treated because of the fixed dimension of the parameter.

\section{Problem statement and preliminary results}\label{section: problem}
A discrete time, causal, convolutional moving average system $\mathcal S_h$ of order $q$  maps input sequences  $(u_t)_{t \in \mathbb N} \in  \mathbb R^\mathbb N$ into output sequences  $(y_t)_{t \in \mathbb N} \in  \mathbb R^\mathbb N$, and is completely characterized by an impulse response vector  $h=(h_t)_{t \in \{0,\ldots,q\}}$, such that
\begin{equation*} \label{convsys}
y_t =  \mathcal S_h u_t = \sum_{k=0}^t  h_k u_{t-k}, \qquad t \in \mathbb N,
\end{equation*}
where $h_k$ is set to zero for $k>q$. Alternatively, one can also write
\begin{equation} \label{convsysq}
y_t =  \mathcal S_h u_t = \sum_{k=0}^{t\wedge q}  h_k u_{t-k}, \qquad  t \in \mathbb N,
\end{equation}
where we write $t\wedge q$ for $\min\{t,q\}$.

Throughout the paper we consider a time horizon $N$ for which we assume $N\geq q$, a standing assumption. Hence we have to replace \eqref{convsysq} by
\begin{equation} \label{convsysqN}
y_t =  \mathcal S_h u_t = \sum_{k=0}^{t\wedge q}  h_k u_{t-k}, \qquad 0\leq t \leq N.
\end{equation}
The special case where $q=N$ has been treated in \cite{fs2015} and $N<q$ yields a redundancy, as the parameters $h_{N+1},\ldots,h_q$ don't play a role in \eqref{convsysqN}.
Rewriting Equation~\eqref{convsysqN} in matrix form, one gets the system of equations
\begin{equation}  \label{eq:matrix}
\begin{pmatrix}
y_0 \\
\vdots \\
\vdots \\
y_N
\end{pmatrix}
=
\begin{pmatrix}
h_0 & 0   & \cdots & \cdots & \cdots & 0 \\
\vdots &    \ddots      & \ddots   &  &   & \vdots \\
h_q &  & \ddots & \ddots & \\
0 & \ddots & & \ddots & \ddots & \vdots \\
\vdots & \ddots & \ddots & & \ddots & 0 \\
0 & \cdots & 0 & h_q & \cdots & h_0
\end{pmatrix}
\begin{pmatrix}
u_0 \\
\vdots \\
\vdots \\
u_N
\end{pmatrix},
\end{equation}
compactly written as
\begin{equation}\label{eq:yu}
y=T(h)u,
\end{equation}
having introduced the notations $u=(u_0,\ldots,u_N)^\top$, \,  $y=(y_0,\ldots,y_N)^\top$ and $T(h)\in\rr^{(N+1)\times (N+1)}$ for the matrix in~\eqref{eq:matrix}. For $m$ input sequences $u^j$, with corresponding output sequences $y^j$, where $j=1,\dots, m$, equation~\eqref{eq:yu} becomes
\begin{equation}\label{eq:YU}
Y=T(h)U,
\end{equation}
where $Y=(y^1,\ldots,y^m)\in\rr^{(N+1)\times m}$ and $U=(u^1,\ldots,u^m)\in\rr^{(N+1)\times m}$. Elements of $Y$ and $U$ are denoted $Y_{ij}$ and $U_{ij}$, instead of $y^j_i$ and $u^j_i$.

%

In many practical contexts the inputs and outputs $U$ and $Y$ are directly measured \emph{data}, while $h$ is not known or, more generally, a causal convolutional system $\mathcal S_h$ is not known to exist such that $Y=T(h)U$. In either of these cases an interesting problem is to find $h$ such that the approximate relation
\begin{equation} \label{eq:approx}
Y \approx T(h)U
\end{equation}
is the best possible with respect to a specified loss criterion.

In this paper we concentrate on this problem, under the extra condition that~\eqref{eq:approx} is the approximate representation of the behavior of a positive system, i.e.\ all quantities in~\eqref{eq:approx} are nonnegative real numbers. The goal is the determination of the \emph{best} nonnegative vector $h=(h_0,\ldots,h_q)^\top$, where the loss criterion, chosen to measure the discrepancy between the left and the right hand side in~\eqref{eq:approx}, is the {\em I-divergence} between nonnegative matrices. See~\cite{cs1991} for a justification from first principles.

For given nonnegative vectors, matrices, tensors $M$ and $N$ of the same size, indexed by some variable $\alpha$, the {\em I-divergence} between them is defined as
\begin{equation} \label{def:Idiv}
\ii(M||N):= \sum_{\alpha}\left(M_{\alpha}\log\frac{M_{\alpha}}{N_{\alpha}}-M_{\alpha}+N_{\alpha}\right)\leq\infty.
\end{equation}
In definition~(\ref{def:Idiv}) we also adopt the usual conventions $\frac{0}{0}=0$, $0\log 0 = 0$ and $\frac{p}{0}=\infty$ for $p>0$.

\begin{problem}\label{problem:minh}
For given $Y\geq 0$  and $U \geq 0 $, find a nonnegative vector $h=(h_0,\ldots,h_q)^\top\in \ch := \mathbb R^{q+1}_+$ such that $F:\ch\to [0,\infty]$,
\[
F(h):=\ii(Y||T(h)U)
\]
is minimized over $\ch$.
\end{problem}
Problem~\ref{problem:minh} is well posed if there exists at least one $h\in\rr^{q+1}_+$ such that $F(h)$ is finite.
Under a rather weak condition on the data $(U, Y)$, the loss $F(h)$ is strictly convex (and hence Problem~\ref{problem:minh} is well posed), a property that simplifies the study of the existence and uniqueness of the solution of Problem~\ref{problem:minh}.

\begin{cond} \label{cond:sc}
For all $i\in\{0,\ldots,N\}$ there exists $j\in\{1,\ldots,m\}$ such that $U_{0j}>0$ and $Y_{ij}>0$.
\end{cond}

\noindent
This condition holds e.g.\ under the (stronger) assumption that for some experiment $j$, with initial input $U_{0j}>0$, the output trajectory $Y_{ij}$ is strictly positive for all $i$. Lemma~\ref{lemma:fconvex} below is similar to \cite[Lemma II.6]{fs2015}, but its proof uses different arguments.

\begin{lemma}\label{lemma:fconvex}
Under Condition \ref{cond:sc} the loss $F(h)$ is strictly convex on its effective domain, i.e. the set
$\{\,h\in\ch: \, F(h)< \infty \, \}$.
\end{lemma}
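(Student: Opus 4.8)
The plan is to show strict convexity of $F$ by analysing its behaviour along line segments in $\ch$ and invoking the strict convexity of the scalar function $x\mapsto -\log x$. Recall that each entry of $T(h)U$ is linear in $h$: writing $(T(h)U)_{ij}=\sum_{k=0}^{i\wedge q}h_k U_{i-k,j}$, the map $h\mapsto T(h)U$ is linear, so $(T(h)U)_{ij}=\ell_{ij}(h)$ for linear functionals $\ell_{ij}$. Expanding the I-divergence,
\begin{equation*}
F(h)=\sum_{i,j}\left(Y_{ij}\log Y_{ij}-Y_{ij}\log \ell_{ij}(h)-Y_{ij}+\ell_{ij}(h)\right),
\end{equation*}
the terms $Y_{ij}\log Y_{ij}$ are constants, $\ell_{ij}(h)$ is linear (hence convex but not strictly), and the only source of strict convexity is the collection of terms $-Y_{ij}\log\ell_{ij}(h)$. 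First I would observe that each $h\mapsto -Y_{ij}\log\ell_{ij}(h)$ is convex, so $F$ is convex; the real work is upgrading this to \emph{strict} convexity on the effective domain.

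The key step is the following. Strict convexity fails only if there is a direction $v=h^1-h^0\neq 0$ (with $h^0,h^1$ distinct points of the effective domain) along which $F$ is affine, which forces $\ell_{ij}(h^0+tv)$ to be constant in $t$ for every pair $(i,j)$ with $Y_{ij}>0$ that contributes a finite term, i.e.\ $\ell_{ij}(v)=0$ whenever $Y_{ij}>0$. So I would argue by contradiction: suppose such a nonzero $v=(v_0,\ldots,v_q)$ exists with $\sum_{k=0}^{i\wedge q}v_k U_{i-k,j}=0$ for all $(i,j)$ with $Y_{ij}>0$, and derive $v=0$. The plan is to use Condition~\ref{cond:sc} to kill the components of $v$ one at a time. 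For $i=0$: there is a $j$ with $U_{0j}>0$ and $Y_{0j}>0$, and the constraint reads $v_0 U_{0j}=0$, forcing $v_0=0$. Proceeding inductively on $i$, at stage $i$ the constraint for a suitable column $j$ (with $U_{0j}>0$, $Y_{ij}>0$) reads $\sum_{k=0}^{i\wedge q}v_k U_{i-k,j}=0$; the terms with $k<i\wedge q$ involve $v_0,\ldots,v_{i\wedge q-1}$, which vanish by the induction hypothesis, leaving $v_{i\wedge q}U_{0j}=0$, and hence $v_{i\wedge q}=0$ since $U_{0j}>0$. Running $i$ from $0$ up to $q$ (which is legitimate because $N\geq q$ guarantees enough rows) eliminates $v_0,\ldots,v_q$ in turn, so $v=0$, the desired contradiction.

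I expect the main obstacle to be bookkeeping rather than a deep idea: one must be careful that the index $j$ realising Condition~\ref{cond:sc} may depend on $i$, and that at step $i$ the relevant constraint isolates exactly the coefficient $v_{i\wedge q}$ multiplied by $U_{0j}$ because the shift structure of $T(h)$ places $U_{0j}$ as the coefficient of $h_{i}$ in row $i$ (for $i\le q$). A secondary subtlety is the interaction with the effective domain and the $\infty$ convention: I would note that along the segment the finiteness of $F$ at the endpoints, together with convexity, guarantees finiteness on the open segment, so the affineness argument is applied only to genuinely finite terms, and the pairs $(i,j)$ with $Y_{ij}>0$ appearing in the sum are precisely those where the logarithmic term is active. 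With these points handled, the induction closes cleanly and strict convexity follows.
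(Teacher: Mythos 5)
Your proposal is correct and takes essentially the same route as the paper's proof: both reduce the failure of strict convexity to the vanishing of the linear forms $\sum_{k=0}^{i\wedge q}v_kU_{i-k,j}$ for the pairs $(i,j)$ with $Y_{ij}>0$ supplied by Condition~\ref{cond:sc}, and both then exploit the shift structure of $T(h)$ to get a lower-triangular system with diagonal entries $U_{0,j(i)}>0$, forcing $v=0$. Your forward-substitution induction is just an unrolled version of the paper's triangularity argument (the paper phrases the convexity step as strict concavity of $h\mapsto\sum_{ij}Y_{ij}\log(T(h)U)_{ij}$, you as affineness of $F$ along a segment, which is the same thing), and your explicit handling of finiteness on the effective domain is a minor, welcome addition.
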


\begin{proof}
%
We exploit strict concavity of the logarithm. It is sufficient to prove strict concavity of $h\mapsto \sum_{ij} Y_{ij}\log (T(h)U)_{ij}$. Note that all mappings $h\mapsto Y_{ij}\log (T(h)U)_{ij}$ are concave. Hence it is sufficient to show that at least on of them is strictly concave. Fix $i$ and choose $j=j(i)$ such that $Y_{ij}>0$ and $U_{0j}>0$. We show that for at least on pair $(i,j)$ one has strict concavity of $h\mapsto\log (T(h)U)_{ij}$. Choose different vectors $h^0,h^1\in\rr^{q+1}_+$ and let $\bar h=(1-t)h^0+th^1$ for $t\in(0,1)$. We have to show that there is an $i$ such that $(T(\bar h)U)_{ij}$ is not equal to one of the $(T(h^0)U)_{ij}$ and $(T(h^1)U)_{ij}$. Suppose on the contrary that $(T(\bar h)U)_{ij}=(T(h^0)U)_{ij}$ for all $i$. It is sufficient to restrict our attention to $i\leq q$, since we assumed $N\geq q$. This is then equivalent to $\sum_{l=0}^{q}v_lU_{i-l,j}=0$ for all $i$, where $v_l=h^1_l-h^0_l$. This gives a linear system of $q+1$ equations in the $v_l$ in which the coefficient matrix is lower triangular with the $U_{0,j(i)}$ on the diagonal. But these diagonal elements are all strictly positive, hence the $v_l$ are all zero, which contradicts $h^0\neq h^1$.
\end{proof}

\begin{remark}\label{remark:boundary}
In solving Problem~\ref{problem:minh}, minimizers $h^*$ at the boundary of $\mathcal H=\mathbb R^{q+1}_+$, i.e. with some zero components, are the rule rather than an exception when \emph{q=N}, see~\cite[Remark~10]{fs2015}. But, if $N$ is much larger than $q$, it has been observed that one often has interior solutions. See Section~\ref{section:numerics} for an illustration of this remark.
\end{remark}
We now state the existence and uniqueness result. The statement and its proof are verbatim the same as for Proposition~7 in~\cite{fs2015}. An important ingredient of the proof is that the search for a minimizer can be confined to a suitable compact set, on which the divergence is finite.
\begin{proposition}\label{proposition:minh}
Assume Condition~\ref{cond:sc} is satisfied, \ then Problem~\ref{problem:minh} admits a unique solution.
\end{proposition}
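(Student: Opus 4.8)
The plan is to establish existence and uniqueness separately, leveraging the strict convexity already secured in Lemma~\ref{lemma:fconvex}. Uniqueness is the easy half: since Problem~\ref{problem:minh} is well posed under Condition~\ref{cond:sc} (there is at least one $h$ with $F(h)<\infty$), and $F$ is strictly convex on its effective domain, any two distinct minimizers would yield a strict decrease at their midpoint, a contradiction. So the real work is existence, and the announced strategy is the classical one: confine the search to a compact set on which $F$ is finite, and invoke lower semicontinuity together with the Weierstrass extreme value theorem.

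First I would fix a reference point $h^{(0)}\in\ch$ with $F(h^{(0)})<\infty$ and consider the sublevel set $L=\{h\in\ch:\,F(h)\le F(h^{(0)})\}$; any minimizer must lie in $L$, so it suffices to minimize over $L$. The key structural point is that $F$ is \emph{coercive} on $\ch$ in the relevant directions, i.e.\ $F(h)\to\infty$ as $\|h\|\to\infty$ within $\ch$, which forces $L$ to be bounded. This is where Condition~\ref{cond:sc} re-enters: for each row index $i$ there is a column $j=j(i)$ with $Y_{ij}>0$ and $U_{0j}>0$, so the entry $(T(h)U)_{ij}$ contains a term $h_0 U_{0j}$ together with other nonnegative terms, and more generally the triangular structure of $T(h)$ in~\eqref{eq:matrix} lets each component $h_k$ appear with a strictly positive input coefficient in some $(T(h)U)_{ij}$ with $Y_{ij}>0$. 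As any $h_k\to\infty$, the corresponding linear term $(T(h)U)_{ij}$ grows linearly while the penalty $-Y_{ij}\log(T(h)U)_{ij}+(T(h)U)_{ij}$ in~\eqref{def:Idiv} grows like $(T(h)U)_{ij}$, so $F(h)\to\infty$. Hence $L$ is bounded, and being the sublevel set of the function $F$, it is closed in the closed set $\ch$, therefore compact.

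Next I would record that $F$ is lower semicontinuous on $\ch=\rr^{q+1}_+$, with values in $[0,\infty]$. This is essentially immediate from the definition~\eqref{def:Idiv}: each summand $M_\alpha\log(M_\alpha/N_\alpha)-M_\alpha+N_\alpha$ is jointly lower semicontinuous in $N_\alpha=(T(h)U)_{ij}$ as $h$ varies continuously, using the conventions $0\log 0=0$ and $p/0=\infty$ for $p>0$; the only subtlety is the behaviour as an entry $(T(h)U)_{ij}$ approaches zero while $Y_{ij}>0$, where the summand tends to $+\infty$, consistent with lower semicontinuity. A lower semicontinuous function on the compact set $L$ attains its infimum, which furnishes a minimizer $h^*$, and since $h^*\in L$ gives $F(h^*)\le F(h^{(0)})<\infty$, the minimizer lies in the effective domain where strict convexity applies, completing the argument.

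The main obstacle I anticipate is the coercivity step, specifically verifying that \emph{every} coordinate direction $h_k\to\infty$ is penalized rather than only $h_0$. Condition~\ref{cond:sc} directly controls only the leading coefficient $h_0$ (through $U_{0j}>0$), so one must exploit the full lower-triangular Toeplitz structure of $T(h)$ together with the assumption $N\ge q$ to argue that each $h_k$ genuinely influences some output entry paired with a positive $Y_{ij}$; the degenerate possibility that a large $h_k$ is never "seen" because the relevant inputs vanish is exactly what the linear-independence computation in the proof of Lemma~\ref{lemma:fconvex} rules out. Since the paper states the result and proof are verbatim the same as Proposition~7 of~\cite{fs2015}, I expect the cleanest route is to cite that argument, but the coercivity-on-a-compact-set mechanism above is the self-contained reconstruction I would write.
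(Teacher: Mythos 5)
Your proposal is correct and takes essentially the same route as the paper, whose proof is simply a citation of Proposition~7 in \cite{fs2015} together with exactly the mechanism you reconstruct: confining the search for a minimizer to a compact set on which the divergence is finite (your coercive sublevel set, compact because each $h_k$ is ``seen'' through the term $h_kU_{0,j(k)}$ in row $k\le q\le N$ supplied by Condition~\ref{cond:sc}), then combining lower semicontinuity with the strict convexity of Lemma~\ref{lemma:fconvex} for existence and uniqueness. The only detail worth making explicit is that every summand of the I-divergence in \eqref{def:Idiv} is nonnegative, so that the single diverging summand in your coercivity step really does force $F(h)\to\infty$.
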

\begin{remark}\label{remark:uniqueh}
Suppose that given the input sequences, the outputs are obtained by  a true convolutional system $Y=T(h^*)U$ for some $h^*\in\mathcal{H}$. It follows from Proposition~\ref{proposition:minh} that under Condition~\ref{cond:sc}, the minimizer of $h\mapsto F(h)$ is $h^*$ and $F(h^*)=0$. Note too that under the same Condition~\ref{cond:sc} the system of equations $T(h)U=T(h^*)U$ has the unique solution $h=h^*$.

If for the general case one wants to check whether a proposed vector $h^*$ is a minimizer, it is by the convexity result of Lemma~\ref{lemma:fconvex} sufficient to check the Kuhn-Tucker conditions  (see e.g.~\cite[Theorem~2.19]{zangwill}).
\end{remark}

\section{The algorithm} \label{sec:lift}
\setcounter{equation}{0}

To solve Problem~\ref{problem:minh} we propose an alternating minimization algorithm, based on a variation of the lifting technique pioneered by \cite{ct1984}. The same approach was previously adopted in~\cite{fs2015} for the solution of Problem~\ref{problem:minh} under the condition $q=N$.
The results of this section are in spirit the same as the corresponding ones in~\cite[Section~III]{fs2015} and can be derived in an analogous way. Proofs are therefore omitted.


This leads to the following algorithm, almost identical to Algorithm~19 in~\cite{fs2015},  with minor differences only, see also Remark~\ref{moreq}.
\begin{algorithm}\label{algorithm:h}
Initialize at a strictly positive vector $h^0$ and define recursively for $t\geq 0$
$$
h^{t+1}=I(h^t),
$$
where the map $I$ acts on the components of $h^t$ as follows. For $k=0,\ldots,q$,
\begin{equation}\label{eq:rech}
h^{t+1}_k = I_k(h^t):=\frac{h^t_k}{\sum_{l=0}^{N-k}U_{l\centerdot}}\sum_{j=1}^m\sum_{i=k}^{N}\frac{Y_{ij}U_{i-k,j}}{\sum_{p=0}^{i\wedge q}h^t_{p}U_{i-p,j}}.
\end{equation}
\end{algorithm}
If the data satisfy $U_{0\centerdot}>0$, as is the case under Condition~\ref{cond:sc}, any $h^0>0$ componentwise is sufficient for $F(h^0)<\infty$.
%
%
%

\begin{remark} \label{moreq}
If $q=N$, Algorithm~\ref{algorithm:h} is exactly the same as Algorithm~19 in \cite{fs2015}.  If $q<N$,
one can add artificial parameters $h_k=0$ for $k=q+1,\ldots,N$. Starting the algorithm in $h_k^0=0$ for those $k$, we see that all iterated values $h^t_k$ are zero as well. The extension of the algorithm with these iterates then also yields the algorithm of \cite{fs2015}, with the modification of the zero initial values for $k=q+1,\ldots,N$. Note that, although Algorithm~\ref{algorithm:h} can this be viewed as a special case of \cite[Algorithm~19]{fs2015}, it requires separate derivation in principle. The reason is that the algorithm follows from the two partial minimization problems. The second one of which can be considered as a constraint version of the second minimization problem in \cite{fs2015}, the constraints being $h_k=0$ for $k=q+1,\ldots,N$. Curiously enough the solution of the second minimization problem coincides with the $h^*_k$ in the non-constrained problem in \cite{fs2015}.
\end{remark}
Here are a few properties, parallelling those in \cite{fs2015}. Positivity of the initial values is preserved by the iterations; the algorithm decreases the divergence $\ii(Y||T(h^t)U)$ at each step; the recursion enjoys a stability property, if $h^t$ is such that $F$ is increasing (decreasing) in the $k$-th coordinate of $h^t$, then $h^{t+1}_k<h^t_k$ ($h^{t+1}_k>h^t_k$); the vectors $h^t$ belong to a certain compact set, in fact a simplex.

\begin{remark}
Algorithm~\ref{algorithm:h} has multiplicative update rules for the $h^t_k$ and all iterates remain positive. In principle the algorithm risks to get trapped if some component $h_k^t$ is (nearly) zero. But Theorem~\ref{thm:limit} below guarantees that the algorithm converges to the minimizing $h$, and hence will not get trapped elsewhere.
This is in contrast with many other algorithms with a multiplicative update rule. For further discussion on this issue see~\cite{lin2007}.
\end{remark}
We close this section with a result concerning the asymptotic behaviour of Algorithm~\ref{algorithm:h}, Theorem~\ref{thm:limit} below.
Its omitted proof, much like the one of Theorem~25 in \cite{fs2015}, heavily relies on the optimality results for the partial minimizations and a series of lemmas as in the cited paper.
\begin{theorem}\label{thm:limit}
The sequence of iterates $h^t$ converges to a limit $h^\infty$ which minimizes $h\to\ii(Y||T(h)U)$.
\end{theorem}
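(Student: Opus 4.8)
The plan is to follow the Csisz\'ar--Tusn\'ady scheme for alternating minimization that underlies the lifting of Section~\ref{sec:lift}, in complete analogy with the proof of Theorem~25 in \cite{fs2015}; I sketch the steps and indicate where the real work lies.

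First I would record the consequences of the two partial minimizations. Writing $I=I_2\circ I_1$, where $I_1$ produces the optimal completion (a nonnegative tensor with prescribed marginal $Y$) and $I_2$ minimizes over the parameter to produce $h^{t+1}$, each step is an I-projection and therefore satisfies a Pythagorean identity (the Pythagoras rules recalled in Section~\ref{sec:lift}). Adding the two identities along one full iteration expresses the decrease
\[
F(h^t)-F(h^{t+1}) \;=\; \Delta^t_1+\Delta^t_2 \;\ge\; 0
\]
as a sum of two nonnegative I-divergences $\Delta^t_1,\Delta^t_2$ measuring the gap in each projection. In particular $t\mapsto F(h^t)=\ii(Y\|T(h^t)U)$ is nonincreasing, so it converges to some $F^\infty\ge 0$; summing the display telescopes and gives $\sum_t(\Delta^t_1+\Delta^t_2)=F(h^0)-F^\infty<\infty$, whence $\Delta^t_1,\Delta^t_2\to 0$. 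The vanishing of these gaps is what will pin down the limit.

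Next I would invoke compactness. As recorded among the properties in Section~\ref{sec:lift}, all iterates lie in a fixed compact set (a simplex), so $(h^t)$ has a convergent subsequence $h^{t_n}\to h^\infty\in\ch$. The update map $I$ in \eqref{eq:rech} is continuous wherever the denominators stay positive, which under Condition~\ref{cond:sc} holds along the relevant indices, so $h^{t_n+1}=I(h^{t_n})\to I(h^\infty)$. Since the full sequence $F(h^t)$ converges while the gap divergences tend to $0$, the limit must be a fixed point, $I(h^\infty)=h^\infty$. A direct computation then shows that any fixed point of $I$ satisfies the Kuhn--Tucker conditions for Problem~\ref{problem:minh}: $\partial F/\partial h_k=0$ at coordinates with $h^\infty_k>0$, and the outward derivative is nonnegative at coordinates with $h^\infty_k=0$. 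By the strict convexity of $F$ on its effective domain (Lemma~\ref{lemma:fconvex}) these conditions are sufficient for global optimality, so $h^\infty$ is the minimizer, unique by Proposition~\ref{proposition:minh}. As every convergent subsequence of $(h^t)$ thus has the same limit and the sequence is confined to a compact set, the whole sequence converges to it.

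The main obstacle is the passage from ``the objective stops decreasing'' to ``the limit is the minimizer,'' and especially the behaviour at the boundary of $\ch$. Because the updates \eqref{eq:rech} are multiplicative, a coordinate that reaches $0$ can never recover, so one must exclude the sequence stalling at a suboptimal boundary point and must recover the correct \emph{inequality} part of the Kuhn--Tucker conditions in the limit. This is exactly where the Pythagorean relations do the heavy lifting: they upgrade mere monotone decrease, which by itself need not force convergence of the iterates, into the vanishing of the gap divergences $\Delta^t_1,\Delta^t_2$, and it is their vanishing, rather than the convergence of $F(h^t)$ alone, that identifies $h^\infty$ as a genuine Kuhn--Tucker point.
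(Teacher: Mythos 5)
Your proposal takes essentially the same route as the paper's own (omitted) proof: the paper explicitly defers to the proof of Theorem~25 in \cite{fs2015}, which rests on exactly the ingredients you assemble --- the Pythagorean identities for the two partial minimizations of the lifted problem, monotone decrease and telescoping of the gap divergences, confinement of the iterates to a compact simplex, continuity of the update map and fixed-point identification, and the Kuhn--Tucker conditions combined with strict convexity (Lemma~\ref{lemma:fconvex}) and uniqueness (Proposition~\ref{proposition:minh}) to conclude that every subsequential limit, and hence the whole sequence, converges to the minimizer. Your sketch also correctly locates where the genuine work lies, namely upgrading the vanishing of the gap divergences into the boundary (inequality) part of the Kuhn--Tucker conditions, which is precisely the role of the ``series of lemmas'' the paper cites from \cite{fs2015}.
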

Here is a a very simple example.
\begin{example}
Suppose $q=0$ and $N\geq 0$. This is an instance in which Problem~\ref{problem:minh} has an explicit solution, $h^*_0=\frac{\sum_{ij}Y_{ij}}{\sum_{ij}U_{ij}}$. Starting with $h^0_0>0$, Algorithm~\ref{algorithm:h} produces $h^1_0=h^*_0$, so it reaches the minimizing value in one step. When $q>0$ there is no termination of the algorithm in finitely many steps that achieves the minimizing vector $h^*$, but an explicit solution for $q=1$, $N=1$ is available, see \cite[Example~II.9]{fs2015}. Depending on the data, there are boundary solutions in the latter case.
\end{example}

\section{Statistics} \label{section:stats}
\setcounter{equation}{0}

In the previous sections we focused on the minimization of $F(h)=\ii(Y||T(h)U)$ over $h\in\rr^{q+1}_+$, where $Y$ and $U$ were given matrices and we presented an algorithm that asymptotically yields the minimizer. In the present section we concentrate on a statistical version of the minimization problem and its large sample properties under different observation regimes. Specifically we study the asymptotics when the number of input sequences grows and/or the time horizon tends to infinity. The latter is possible in the present context, unlike in \cite{fs2015}, because of the fixed dimension of the parameter.

Recall that $Y,U\in\rr^{(N+1)\times m}$, but now random quantities. For each fixed $m,N$, Algorithm~\ref{algorithm:h} can be used to find the optimizing $\hat h^{N,m}$, which now becomes a random vector in $\rr^{q+1}_+$. We will give limit results on consistency and asymptotic normality for the $\hat h^{N,m}$ in three cases.
First for $m\to\infty$, and the columns $U^j$ of $U$ ($j=1,\ldots,m$) form an i.i.d.\ sample. Then for $N\to\infty$, and the rows $U_i$ of $U$ ($i=0,\ldots,N$) form an i.i.d.\ sample. Finally for $N,m\to\infty$, and all $U_{ij}$ ($i=0,\ldots,N$, $j=1,\dots,m$) form an i.i.d.\ sample.


\begin{assumption}\label{assumption:delta}
We assume throughout this section the `true' relationships
\begin{equation}\label{eq:delta}
Y_{ij}=(T(h^*)U^j)_i\delta_{ij},\quad  i=0,\ldots,N, \quad j=1,\ldots,m,
\end{equation}
where $h^*$ is an interior point of $\rr^{q+1}_+$, and the $\delta_{ij}$ are nonnegative random variables, representing multiplicative noise. We will always assume that the  $\delta_{ij}$ form a double array of i.i.d.\ random variables,  that  all $\delta_{ij}$ are independent of all $U_{ij}$ and that $\ee\delta_{ij}=1$.
\end{assumption}
Further assumptions will be detailed in the subsections below.

\subsection{Asymptotics for $m\to\infty$, $N$ fixed}\label{sec:mtoinfty}

For matrices $Y,U$ one can write $\ii(Y||T(h)U)=\sum_{j=1}^m\ii(Y^j||T(h)U^j)$, with the $Y^j$ and $U^j$ the columns of the matrices $Y$ and $U$ respectively. In this section we assume, next to Assumption~\ref{assumption:delta}, that the pairs $(Y^j,U^j)$ are i.i.d.
Let $(y,u)$ be a pair of random vectors that has the same distribution as each of the $(Y^j,U^j)$. Elements of $y$ (and $u$) are denoted  $y_i$ (and $u_i$).
Here is the first result, basically the same as \cite[Lemma~27]{fs2015}.

\begin{lemma}\label{lemma:lc}
Assume the model \eqref{eq:delta}, independence of $u_i$ and $\delta_i$, $\ee u_i<\infty$, $\ee\delta_i=1$,  and $\ee\delta_i |\log\delta_i|<\infty$. Then it holds for all $h\in\ch$ that
\begin{align*}
\lefteqn{\ee\ii(y||T(h)u)} \\
& =\ee\ii(T(h^*)u||T(h)u)  + \:\sum_i(\ee(T(h^*)u)_i\ee(\delta_i\log\delta_i).
\end{align*}
Minimizing the function $h\mapsto\ee\ii(y||T(h)u)$ (referred to below as the limit criterion) is therefore equivalent to minimizing $h\mapsto\ee\ii(T(h^*)u||T(h)u)$.
\end{lemma}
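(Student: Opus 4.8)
The plan is to compute the expectation $\ee\ii(y||T(h)u)$ directly from the definition of I-divergence and the model assumptions, isolating a term that depends on $h$ and a term that does not. Writing $y_i = (T(h^*)u)_i\delta_i$ for a generic pair $(y,u)$, I would expand
\[
\ii(y||T(h)u) = \sum_i\Bigl( y_i\log\frac{y_i}{(T(h)u)_i} - y_i + (T(h)u)_i\Bigr)
\]
and substitute $y_i=(T(h^*)u)_i\delta_i$. The key algebraic step is to split the logarithm: $\log\frac{(T(h^*)u)_i\delta_i}{(T(h)u)_i} = \log\frac{(T(h^*)u)_i}{(T(h)u)_i} + \log\delta_i$. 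This separates each summand into a piece matching the definition of $\ii(T(h^*)u||T(h)u)$ (after accounting for the $-y_i+(T(h)u)_i$ terms) and a residual piece proportional to $(T(h^*)u)_i\delta_i\log\delta_i$.

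Next I would take expectations term by term, using $\ee\delta_i=1$ to collapse the factors $\delta_i$ that multiply $h$-dependent quantities, and crucially the independence of $u$ and $\delta_i$ to factor $\ee[(T(h^*)u)_i\delta_i\log\delta_i] = \ee(T(h^*)u)_i\cdot\ee(\delta_i\log\delta_i)$. Concretely, the terms $\ee[(T(h^*)u)_i\delta_i\log\frac{(T(h^*)u)_i}{(T(h)u)_i}]$ become $\ee[(T(h^*)u)_i\log\frac{(T(h^*)u)_i}{(T(h)u)_i}]$ by pulling out $\ee\delta_i=1$, and the linear terms $\ee[-y_i+(T(h)u)_i]=\ee[-(T(h^*)u)_i+(T(h)u)_i]$ assemble with them into $\ee\ii(T(h^*)u||T(h)u)$. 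The leftover $\sum_i\ee(T(h^*)u)_i\,\ee(\delta_i\log\delta_i)$ is exactly the stated $h$-independent remainder. The final sentence about equivalence of the two minimization problems is then immediate, since the subtracted quantity does not involve $h$.

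The main obstacle is justifying the separation of the expectation into finite pieces, i.e.\ integrability: the rearrangement is only legitimate if the individual expectations are finite and the conventions $0\log 0=0$, $p/0=\infty$ are respected. The hypotheses $\ee u_i<\infty$ and $\ee\delta_i|\log\delta_i|<\infty$ are precisely what guarantees this — the former controls $\ee(T(h^*)u)_i$ (a finite nonnegative combination of the $\ee u_i$), and the latter controls $\ee(\delta_i\log\delta_i)$ so that the remainder term is finite. I would note that if $(T(h)u)_i=0$ on an event where $(T(h^*)u)_i>0$ the divergence is $+\infty$ and the identity holds trivially; otherwise the conventions make each logarithmic term well-defined almost surely, and Tonelli/linearity of expectation applies to the nonnegative and integrable parts. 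Since the result is stated to be essentially \cite[Lemma~27]{fs2015}, I would keep this integrability bookkeeping brief and refer to the cited paper for the routine details.
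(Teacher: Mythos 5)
Your proof is correct and follows exactly the route the paper intends: the paper itself omits the proof, deferring to \cite[Lemma~27]{fs2015}, whose argument is precisely this substitution of $y_i=(T(h^*)u)_i\delta_i$, splitting of the logarithm, and use of $\ee\delta_i=1$ together with independence to factor the residual term. Your integrability remarks (nonnegativity of the divergence summands plus $\ee u_i<\infty$ and $\ee\delta_i|\log\delta_i|<\infty$ for the cross term) are the right bookkeeping, so nothing is missing.
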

The following proposition parallels \cite[Proposition~28]{fs2015} with some minor differences in the statement and the proof.

\begin{proposition}\label{prop:limcrit}
Let $\pp(u_0>0)=1$ and $\ee u_j^2<\infty$ for all $j$. The limit criterion $h\mapsto\ee\ii(y||T(h)u)$ is strictly convex on the set where it is finite (and hence on a neighbourhood of $h^*$) and has a unique minimum for $h=h^*$.
\end{proposition}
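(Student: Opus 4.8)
The plan is to mirror the proof of \cite[Proposition~28]{fs2015}: reduce to the criterion appearing in Lemma~\ref{lemma:lc}, and then run the strict-convexity argument of Lemma~\ref{lemma:fconvex} with the almost-sure positivity $\pp(u_0>0)=1$ playing the role of Condition~\ref{cond:sc}. By Lemma~\ref{lemma:lc} the limit criterion and $G(h):=\ee\ii(T(h^*)u||T(h)u)$ differ by the term $\sum_i \ee(T(h^*)u)_i\,\ee(\delta_i\log\delta_i)$, which does not depend on $h$; hence it suffices to show that $G$ is strictly convex on its (convex) effective domain and uniquely minimized at $h^*$. I would first check that this effective domain contains a neighbourhood of $h^*$. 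For $h$ in a small ball around the interior point $h^*$ every component of $h$ stays positive, so $(T(h)u)_i$ and $(T(h^*)u)_i$ are comparable with bounded ratio; since $x\mapsto x|\log x|$ is bounded near $0$ and $\log x\leq x$ controls large values, the integrand $(T(h^*)u)_i|\log (T(h)u)_i|$ is dominated by a quantity quadratic in the $u_j$, which is integrable because $\ee u_j^2<\infty$. This gives $G(h)<\infty$ near $h^*$.

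For strict convexity, note that the only nonlinear part of $G$ is $-\psi(h)$, where $\psi(h):=\ee\sum_i (T(h^*)u)_i\log (T(h)u)_i$, the remaining $h$-dependent term $\ee\sum_i(T(h)u)_i$ being linear. So it is enough to prove that $\psi$ is strictly concave. Each summand $h\mapsto (T(h^*)u)_i\log (T(h)u)_i$ is concave, hence so is $\psi$. For strictness I would take distinct $h^0,h^1$ in the effective domain, put $\bar h=(1-t)h^0+th^1$ with $t\in(0,1)$, and observe from concavity of the logarithm that for each $i$ the pointwise concavity gap is nonnegative, and strictly positive precisely when $(T(h^*)u)_i>0$ and $(T(h^0)u)_i\neq (T(h^1)u)_i$. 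Writing $v=h^1-h^0\neq 0$ and $l^*=\min\{l:v_l\neq 0\}\leq q$, on the probability-one event $\{u_0>0\}$ one has $(T(v)u)_{l^*}=v_{l^*}u_0\neq 0$ (all lower-index contributions vanish) and $(T(h^*)u)_{l^*}\geq h^*_{l^*}u_0>0$ since $h^*_{l^*}>0$. Thus the gap at the single index $i=l^*$ is almost surely positive, so taking expectations yields $\psi(\bar h)>(1-t)\psi(h^0)+t\psi(h^1)$.

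For the minimum, since $x\log(x/y)-x+y\geq 0$ with equality iff $x=y$, we have $G\geq 0$ pointwise and $G(h^*)=0$, so $h^*$ minimizes $G$; strict convexity then forces it to be the unique minimizer (equivalently, $G(h)=0$ would force $(T(h)u)_i=(T(h^*)u)_i$ a.s.\ for all $i$, and the same $l^*$-argument forces $h=h^*$). Translating back through Lemma~\ref{lemma:lc} gives the claim for the limit criterion.

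I expect the only genuinely delicate step to be the integrability estimate showing that the effective domain is a true neighbourhood of $h^*$; this is exactly where the second-moment hypothesis $\ee u_j^2<\infty$ enters, through the domination of the logarithmic terms. The strict-convexity core, by contrast, is the transparent ``first nonzero coordinate'' argument, which is available here because $u_0>0$ almost surely replaces the triangular-system device used in Lemma~\ref{lemma:fconvex}.
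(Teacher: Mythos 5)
Your proof is correct, but it follows a genuinely different route from the paper's. The paper argues at second order: it computes the Hessian of the limit criterion,
\[
H(h)_{kl}=\ee\sum_{j=0}^N\frac{(T(h^*)u)_j}{(T(h)u)_j^2}\,u_{j-k}u_{j-l},
\]
shows $x^\top H(h)x=\ee\sum_{j}\frac{(T(h^*)u)_j}{(T(h)u)_j^2}(u*x)_j^2=0$ forces $(u*x)_j=0$ a.s.\ for $j\le q$ (since $(T(h^*)u)_j\ge h_j^*u_0>0$), and then kills $x$ via a lower-triangular linear system with diagonal entries $u_0>0$; the minimum at $h^*$ then comes from Lemma~\ref{lemma:lc}. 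You instead argue at zeroth order, transplanting the strict-concavity-of-the-logarithm device of Lemma~\ref{lemma:fconvex} into the stochastic setting: the first nonzero coordinate $l^*\le q$ of $h^1-h^0$ gives an a.s.\ strictly positive concavity gap at index $l^*$ because $(T(h^1)u)_{l^*}-(T(h^0)u)_{l^*}=v_{l^*}u_0\neq0$ a.s.\ and $(T(h^*)u)_{l^*}\ge h^*_{l^*}u_0>0$. The two arguments rest on the same algebraic fact (triangularity with positive diagonal $u_0$), just applied to the kernel of the Hessian quadratic form in the paper and to the difference $h^1-h^0$ in your version. What your route buys: it avoids differentiating under the expectation sign (a step the paper leaves implicit), and it explicitly verifies, via the second-moment domination estimate, that the effective domain is a genuine neighbourhood of $h^*$ -- the parenthetical claim in the statement that the paper does not spell out; also your observation $G\ge 0=G(h^*)$ identifies the minimizer without re-invoking Lemma~\ref{lemma:lc} beyond the additive decomposition. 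What the paper's route buys: the positive-definiteness of $H(h^*)$ is exactly the ingredient reused in the proof of Proposition~\ref{prop:consistent1} for asymptotic normality (via the sandwich/Hessian conditions of van der Vaart's Theorem~5.23), so with your proof that Hessian computation would still have to be done separately there.
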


\begin{proof}
We show that the Hessian $H(h)\in\rr^{(q+1)\times(q+1)}$ at $h$ of the limit criterion is strictly positive definite on the set where the limit criterion is finite. A computation shows that the $kl$-element of this matrix is equal to (recall the convention $u_i=0$ for $i<0$)
\[
H(h)_{kl}=\ee\sum_{j=0}^N
\frac{(T(h^*)u)_j}{(T(h)u)_j^2}u_{j-k}u_{j-l}.
\]
Hence, for any vector $x=( x_0, \ldots, x_N )^\top$ one has,   using the convolution notation $(u*x)_j:=\sum_k x_ku_{j-k}$,
\begin{align*}
x^\top H(h)x
& = \ee\sum_{j=0}^N\frac{(T(h^*)u)_j}{(T(h)u)_j^2}(u*x)_j^2.
\end{align*}
Suppose that $x^\top H(h)x=0$ for some $x\in\rr^{q+1}$. Then  $\ee\frac{(T(h^*)u)_j}{(T(h)u)_j^2}(u*x)_j^2$ has to be zero for all $j$, in particular for $j\in\{0,\ldots,q\}$. Hence $\frac{(T(h^*)u)_j}{(T(h)u)_j^2}(u*x)_j^2=0$~a.s.\ for $j\in\{0,\ldots,q\}$. Since $(T(h^*)u)_j\geq h^*_ju_0$, which is strictly positive by the assumptions, one can only have $\frac{(T(h^*)u)_j}{(T(h)u)_j^2}(u*x)_j^2=0$~a.s.\ if $(u*x)_j= 0$~a.s. for all $j=0,\ldots,q$.
This gives a system of linear equations $\bar Ux=0$, where $\bar U\in\rr^{(q+1)\times(q+1)}$ is lower triangular with all diagonal elements equal to $u_0$. Using $\pp(u_0>0)=1$, we deduce that $x=0$ iff $x^\top H(h)x=0$.
From Lemma~\ref{lemma:lc} it follows that the limit criterion has a minimum at $h=h^*$, and by strict convexity this must be the unique minimizer.
\end{proof}
As in the present case $N$ is fixed, we simply write $\hat h^m$ for the estimators, i.e. the minimizers of $F_m(h)=\sum_{j=1}^m\ii(Y^j||T(h)U^j)$. The following proposition, basically the same as~\cite[Proposition~29]{fs2015}, describes the large sample behaviour of the $\hat h^m$ for the number of input sequences $m\to\infty$ and the observation horizon $N$ fixed. We include the proof for the sake of completeness.

\begin{proposition}\label{prop:consistent1}
Let Assumption~\ref{assumption:delta} be in force, in particular~\eqref{eq:delta}, and assume that the random vectors $U^j$ form an i.i.d.\ sequence.
Let $\pp(U_{0j}>0)>0$ and $\ee U_{ij}^2<\infty$ for all $i,j$, moreover assume that $h^*$ is an interior point.

The estimators $\hat{h}^m$, defined as the minimizers of the objective function $\sum_{j=1}^m\ii(Y^j||T(h)U^j)$ are consistent. Moreover, this sequence is asymptotically normal, for some positive definite $\Sigma\in\rr^{(q+1)\times (q+1)}$ we have $\sqrt{m}(\hat{h}^m-h^*)\stackrel{d}{\to} N(0,\Sigma)$.
\end{proposition}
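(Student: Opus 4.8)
The plan is to prove this as a standard M-estimation (extremum estimator) result, treating $\hat h^m$ as the minimizer of the empirical criterion $\frac{1}{m}F_m(h)=\frac{1}{m}\sum_{j=1}^m\ii(Y^j||T(h)U^j)$, which by the i.i.d.\ assumption is an average of i.i.d.\ terms converging, by the law of large numbers, to the limit criterion $h\mapsto\ee\ii(y||T(h)u)$ studied in Lemma~\ref{lemma:lc} and Proposition~\ref{prop:limcrit}. The latter has been shown to be strictly convex on a neighbourhood of $h^*$ with unique minimizer $h^*$, so $h^*$ is well-identified. First I would establish \emph{consistency}: I would verify that the empirical criteria converge to the limit criterion uniformly on a suitable compact set containing $h^*$ in its interior (on which the divergence is finite under the assumption $\pp(U_{0j}>0)>0$), and then invoke the standard argmax/argmin consistency theorem for extremum estimators (uniform convergence plus a well-separated unique minimizer implies $\hat h^m\to h^*$ in probability). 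This step is essentially identical to \cite[Proposition~29]{fs2015}.

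Next I would treat \emph{asymptotic normality} by the classical Taylor-expansion argument for smooth M-estimators. Since $h^*$ is an interior point, for large $m$ the minimizer $\hat h^m$ satisfies the first-order condition $\nabla F_m(\hat h^m)=0$. Expanding $\nabla F_m$ around $h^*$ gives
\[
0=\nabla F_m(h^*)+\nabla^2 F_m(\tilde h^m)(\hat h^m-h^*),
\]
for some $\tilde h^m$ on the segment between $\hat h^m$ and $h^*$, so that
\[
\sqrt{m}(\hat h^m-h^*)=-\Big(\tfrac1m\nabla^2 F_m(\tilde h^m)\Big)^{-1}\,\tfrac{1}{\sqrt m}\nabla F_m(h^*).
\]
The score term $\frac{1}{\sqrt m}\nabla F_m(h^*)$ is a normalized sum of i.i.d.\ mean-zero vectors: the mean is zero precisely because $h^*$ minimizes the limit criterion, so $\ee\nabla_h\ii(y||T(h)u)\big|_{h=h^*}=0$. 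By the multivariate central limit theorem this converges in distribution to $N(0,V)$, where $V$ is the covariance of the single-observation score at $h^*$; the moment condition $\ee U_{ij}^2<\infty$ together with $\ee\delta_{ij}=1$ guarantees the requisite finite second moments. Meanwhile $\frac1m\nabla^2 F_m(\tilde h^m)$ converges in probability to the Hessian $H(h^*)$ of the limit criterion (using consistency of $\tilde h^m$ and uniform convergence of the Hessians), which by Proposition~\ref{prop:limcrit} is strictly positive definite and hence invertible. Slutsky's theorem then yields $\sqrt m(\hat h^m-h^*)\stackrel{d}{\to}N(0,\Sigma)$ with $\Sigma=H(h^*)^{-1}V H(h^*)^{-1}$, which is positive definite since $H(h^*)$ is.

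The main obstacle I anticipate is the \emph{uniform} control needed in two places: the uniform law of large numbers for consistency, and the uniform convergence of the empirical Hessians together with a dominating-function/continuity argument to pass from $\nabla^2 F_m(\tilde h^m)$ to $H(h^*)$. The delicacy comes from the logarithmic nonlinearity in the I-divergence, whose derivatives involve terms like $1/(T(h)u)_j$; near the boundary of $\ch$ these blow up, so one must confine attention to a compact neighbourhood of the interior point $h^*$ on which $(T(h)u)_j$ is bounded below (using $\pp(u_0>0)=1$, which forces $(T(h)u)_j\geq h_j u_0>0$) and construct an integrable envelope for the derivatives, leaning on the second-moment assumption $\ee U_{ij}^2<\infty$. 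Once this domination and the interiority of $h^*$ are in place, the remaining steps are the routine CLT and Slutsky arguments. Since the structure is verbatim that of \cite[Proposition~29]{fs2015}, I would present the argument concisely and refer there for the technical envelope estimates, highlighting only that the fixed order $q$ (rather than $q=N$) does not alter the argument because $h^*$ is assumed interior and $N$ is fixed.
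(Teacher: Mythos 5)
Your proposal is correct, and it rests on the same two pillars as the paper's proof---identification of $h^*$ as the unique minimizer and strict positive definiteness of the Hessian $H(h^*)$ of the limit criterion, both imported from Proposition~\ref{prop:limcrit}---but the technical route differs in both halves. For consistency, the paper never verifies a uniform law of large numbers: it exploits convexity of $h\mapsto\ii(Y^j||T(h)U^j)$, so that pointwise convergence of the empirical criteria already suffices (this is exactly what \cite[Problem~5.27]{vandervaart} buys), and then applies \cite[Theorem~5.7]{vandervaart}. Your plan (ULLN on a compact neighbourhood plus an argmin theorem) works, but the envelope construction near the boundary of $\ch$ that you flag as the main obstacle is precisely the work that the convexity shortcut makes unnecessary. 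For asymptotic normality, the paper invokes \cite[Theorem~5.23]{vandervaart}, which needs only a Lipschitz-in-$h$ condition and a second-order expansion of the \emph{limit} criterion at $h^*$, so the single thing left to check is that $H(h^*)$ is strictly positive definite; your classical Taylor expansion of $\nabla F_m$ with the sandwich $\Sigma=H(h^*)^{-1}VH(h^*)^{-1}$ additionally requires uniform convergence of the empirical Hessians near $h^*$, which in this smooth, fixed-dimensional setting is indeed available by the domination you sketch (for $h$ in a small neighbourhood of the interior point $h^*$ one has $(T(h)u)_i\geq c\,(T(h^*)u)_i$ componentwise, so $Y_{ij}/(T(h)U^j)_i=\delta_{ij}(T(h^*)U^j)_i/(T(h)U^j)_i\leq c^{-1}\delta_{ij}$ is controlled). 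One caveat common to both routes: the CLT for the score (equivalently, square-integrability of the Lipschitz envelope in Theorem~5.23) requires $\ee\delta_{ij}^2<\infty$, and positive definiteness of $V$ (hence of $\Sigma$) requires $\var\delta_{ij}>0$; neither is listed in Assumption~\ref{assumption:delta}, so this is a gap you share with the paper rather than one you introduced.
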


\begin{proof}
The limit criterion $h\mapsto\ee\ii(Y||T(h)U)$ is strictly convex, continuous on the set where it is finite. Therefore from~\cite[Problem~5.27]{vandervaart} we conclude that the conditions of \cite[Theorem~5.7]{vandervaart} are satisfied and consistency follows.
To show that the estimators $\hat{h}^m$ are asymptotically normal with covariance function as given in \cite[Theorem~5.23]{vandervaart}, we have to show that the Hessian $H(h^*)$ at $h^*$ of the limit criterion is strictly positive definite. But this follows from the proof of Proposition~\ref{prop:limcrit}  taking $h=h^*$.
\end{proof}

\subsection{Asymptotics for $N\to\infty$, $m$ fixed}\label{section:ntoinfty}

The standing assumption is again Assumption~\ref{assumption:delta}. Let, as before, $Y$ and $U$ be matrices. Write $\ii(Y||T(h)U)=\sum_{i=0}^N\ii(Y_i||(T(h)U)_i)$, with the $Y_i$ and $(T(h)U)_i$ the rows of the matrices $Y$ and $T(h)U$.

We'd like to have all rows $Y_i$ mutually independent, but row $Y_i$ partly uses the same inputs as  $Y_{i+1}$, namely the rows $U_{i+1},\ldots,U_{i-q+1}$ (for $i\geq q$). Consider the rows $U_i$ and $U_{i+q+1}$. The elements of these rows that are needed to compute the convolutions $(T(h)U)_{ij}$ are $U_{ij},\ldots,U_{i-q,j}$, whereas for $(T(h)U)_{i+q+1,j}$ one needs $U_{i+q+1,j},\ldots,U_{i+1,j}$. We see that these sets of elements have empty intersection. To have independence of rows $(T(h)U)_i$ and $(T(h)U)_{i+q+1}$  we will assume that  the  rows $U_{i}$ are independent. The interpretation is that in the collective experiments, at different times independent row vectors are used as inputs.

\begin{lemma}\label{lemma:n}
Assume that the rows $U_{i}$ form an i.i.d.\ sequence, and that all necessary expectations are finite. Consider the random criterion function
\[
I_N(h)=\frac{1}{N}\sum_{i=0}^N\ii(Y_i||(T(h)U)_i).
\]
Then one has, for $N\to\infty$ the a.s.\ convergence
\[
I_N(h)\to \ee\ii(Y_{q}||(T(h)U)_{q}).
\]
\end{lemma}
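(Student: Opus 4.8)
The plan is to identify $I_N(h)$ as a Cesàro average of a stationary, ergodic sequence and to invoke the ergodic theorem, after discarding the finitely many boundary terms. Set $Z_i:=\ii(Y_i||(T(h)U)_i)$ and split
\[
I_N(h)=\frac1N\sum_{i=0}^{q-1}Z_i+\frac1N\sum_{i=q}^{N}Z_i.
\]
For indices $i\geq q$ there is no truncation in the convolution, so $(T(h)U)_{ij}=\sum_{k=0}^{q}h_kU_{i-k,j}$ depends only on the rows $U_{i-q},\ldots,U_i$. Under Assumption~\ref{assumption:delta}, equation~\eqref{eq:delta}, the row $Y_i$ is the entrywise product of $(T(h^*)U)_i$ with the noise row $\delta_i$, hence also a function of $U_{i-q},\ldots,U_i$ and $\delta_i$. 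Therefore each $Z_i$ with $i\geq q$ equals one fixed measurable map $\phi$ applied to the sliding window $(U_{i-q},\ldots,U_i,\delta_i)$.

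Next I would use that $(U_i,\delta_i)_{i}$ is an i.i.d.\ sequence: the rows $U_i$ are i.i.d.\ by hypothesis, the rows $\delta_i$ are i.i.d.\ by Assumption~\ref{assumption:delta}, and the two arrays are independent. A fixed-window function of an i.i.d.\ sequence is stationary and ergodic, so $(Z_i)_{i\geq q}$ is stationary and ergodic. Granting $\ee|Z_q|<\infty$, which is what ``all necessary expectations are finite'' provides, Birkhoff's ergodic theorem yields
\[
\frac{1}{n}\sum_{i=q}^{q+n-1}Z_i\longrightarrow\ee Z_q=\ee\ii(Y_q||(T(h)U)_q)\qquad\text{a.s.}
\]
An elementary alternative, avoiding the ergodic theorem, is to note that $Z_i$ and $Z_{i'}$ are independent whenever $|i-i'|>q$, since their windows are then disjoint; splitting $\{q,q+1,\ldots\}$ into the $q+1$ residue classes modulo $q+1$ produces i.i.d.\ subsequences to which the ordinary strong law applies, and recombining the convex combination of the subsequence averages gives the same limit.

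It remains to match normalisations and dispose of the boundary block. The first sum $\sum_{i=0}^{q-1}Z_i$ contains a fixed number of a.s.\ finite terms, so $\frac1N\sum_{i=0}^{q-1}Z_i\to0$ a.s. For the bulk, which contains $N-q+1$ terms,
\[
\frac1N\sum_{i=q}^{N}Z_i=\frac{N-q+1}{N}\cdot\frac{1}{N-q+1}\sum_{i=q}^{N}Z_i\longrightarrow 1\cdot\ee\ii(Y_q||(T(h)U)_q),
\]
and adding the two contributions gives the claim. The main obstacle is the dependence among the summands---consecutive terms share input rows---so the ordinary i.i.d.\ strong law does not apply directly; the resolution is precisely the stationary/ergodic, equivalently $(q+1)$-dependent, structure exhibited above. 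A secondary point to verify is the a.s.\ finiteness of each $Z_i$ and the integrability $\ee|Z_q|<\infty$, which rely on $(T(h)U)_{ij}>0$ whenever $Y_{ij}>0$ and on the finiteness of the relevant moments subsumed under the lemma's hypotheses.
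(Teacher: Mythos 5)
Your proposal is correct. Your primary route, however, differs from the paper's: you observe that for $i\geq q$ the summand $Z_i=\ii(Y_i||(T(h)U)_i)$ is a fixed measurable function of the sliding window $(U_{i-q},\ldots,U_i,\delta_i)$ of an i.i.d.\ sequence, hence $(Z_i)_{i\geq q}$ is stationary and ergodic, and you invoke Birkhoff's ergodic theorem. The paper instead splits $\sum_{i=0}^N Z_i$ into the $q+1$ subsequences indexed by residue classes modulo $q+1$, notes that within each class the terms are i.i.d.\ (windows of terms whose indices differ by at least $q+1$ are disjoint), applies the classical strong law to each class, and absorbs the at most $q$ leftover terms into a remainder that vanishes after division by $N$. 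Both arguments rest on the same structural fact---$(q+1)$-separated windows are independent and identically distributed---and indeed your ``elementary alternative'' paragraph is essentially the paper's proof verbatim. What the ergodic-theorem route buys is generality: it would survive weakening the i.i.d.\ assumption on the rows $U_i$ to stationarity and ergodicity; what the paper's route buys is economy of machinery, needing only the i.i.d.\ strong law. Your explicit separation of the initial block $i<q$, where the convolution is truncated and $Z_i$ has a different distribution, is also slightly more careful than the paper's treatment, which folds those terms into the residue classes as single aberrant leading terms.
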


\begin{proof}
We split the sum $\sum_{i=0}^N\ii(Y_i||(T(h)U)_i)$ into the $q+1$ sums
\[
\sum_{i=0}^{\lfloor \frac{N}{q+1}\rfloor}\ii(Y_{i(q+1)+l}||(T(h)U)_{i(q+1)+l})
\]
and a remainder term of at most $q$ terms $\ii(Y_i||(T(h)U)_i)$. The remainder term divided by $N$ tends to zero a.s. For each $l$ the strong law applies because of the independence properties and we have the a.s.\ convergence
\[
\frac{1}{N}\sum_{i=0}^{\lfloor \frac{N}{q+1}\rfloor}\ii(Y_{i(q+1)+l}||(T(h)U)_{i(q+1)+l})\to\frac{1}{q+1}\ee \ii(Y_{q+1+l}||(T(h)U)_{q+1+l}).
\]
Since the rows $(Y_i,(T(h))U_i)$ have the same distribution for all $i\geq  q$, one has the identity $\ee \ii(Y_{q+1+l}||(T(h)U)_{q+1+l})=\ee \ii(Y_{q}||(T(h)U)_{q})$. The result follows.
\end{proof}

\begin{proposition}\label{prop:limcrit1}
Assume the model \eqref{eq:delta}, the rows $U_i$ form an i.i.d.\ sequence, and for all $i,j$, $\ee U_{ij}<\infty$, $\ee\delta_{ij}=1$,  and $\ee\delta_{ij}|\log\delta_{ij}|<\infty$. Then it holds that
\begin{align*}
\lefteqn{\ee\ii(Y_q||(T(h)U)_q)} \\
& =\ee\ii((T(h^*)U)_q||(T(h)U)_q))  + \:\sum_{j}(\ee(T(h^*)U)_{qj}\ee(\delta_{qj}\log\delta_{qj}).
\end{align*}
Moreover, the divergences $\ee\ii((T(h^*)U)_i||(T(h)U)_i)$ are identical for all $i\geq q$ and the limit criterion $h\mapsto\ee\ii(Y_q||(T(h)U))_q)$ is strictly convex, and hence continuous, on the set where it is finite (and hence on a neighbourhood of $h^*$). It has a unique minimum for $h=h^*$, if $\pp(U_{0j}>0)>0$ for at least one $j$ and $\ee U_{0j}^2<\infty$ for all $j$.
\end{proposition}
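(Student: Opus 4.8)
The plan is to prove the three assertions in turn, leaning on the structure already set up in Lemma~\ref{lemma:lc} and Proposition~\ref{prop:limcrit}. First I would establish the decomposition identity by substituting the model \eqref{eq:delta} into the row-$q$ divergence. Writing $Y_{qj}=(T(h^*)U)_{qj}\delta_{qj}$ and splitting $\log\frac{(T(h^*)U)_{qj}\delta_{qj}}{(T(h)U)_{qj}}=\log\frac{(T(h^*)U)_{qj}}{(T(h)U)_{qj}}+\log\delta_{qj}$, the summand breaks into three groups of terms. Taking expectations and using that $\delta_{qj}$ is independent of $U$ with $\ee\delta_{qj}=1$ collapses every stray $\delta$-factor except in the term $(T(h^*)U)_{qj}\delta_{qj}\log\delta_{qj}$, whose expectation factorises as $\ee(T(h^*)U)_{qj}\,\ee(\delta_{qj}\log\delta_{qj})$ (finite by $\ee\delta_{qj}|\log\delta_{qj}|<\infty$). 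Collecting the remaining terms reproduces $\ee\ii((T(h^*)U)_q||(T(h)U)_q)$, giving the stated identity. This is the exact analogue of Lemma~\ref{lemma:lc}, now applied rowwise.

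For the claim that $\ee\ii((T(h^*)U)_i||(T(h)U)_i)$ is the same for all $i\ge q$, I would note that for $i\ge q$ one has $(T(h)U)_{ij}=\sum_{k=0}^q h_k U_{i-k,j}$, a fixed function of the block of rows $(U_i,\ldots,U_{i-q})$. Since the rows are i.i.d., these blocks are identically distributed, hence so is the pair $((T(h^*)U)_i,(T(h)U)_i)$, and the divergence expectations coincide; this is the stationarity already exploited in Lemma~\ref{lemma:n}.

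By the identity the noise term is $h$-independent, so strict convexity reduces to that of $h\mapsto\ee\ii((T(h^*)U)_q||(T(h)U)_q)$ on the set where it is finite. Following the computation in Proposition~\ref{prop:limcrit}, differentiating twice on this set gives $H(h)_{kl}=\ee\sum_j (T(h^*)U)_{qj}\frac{U_{q-k,j}U_{q-l,j}}{(T(h)U)_{qj}^2}$, whence for $x\in\rr^{q+1}$ one has $x^\top H(h)x=\ee\sum_j\frac{(T(h^*)U)_{qj}}{(T(h)U)_{qj}^2}\big(\sum_k x_kU_{q-k,j}\big)^2$. If this vanishes, each nonnegative summand has zero expectation and hence vanishes a.s.; choosing $j^*$ with $\pp(U_{0j^*}>0)>0$ and using that $h^*$ is interior (so $h^*_q>0$, whence $(T(h^*)U)_{qj^*}>0$ on the event that some entry $U_{i,j^*}$ with $0\le i\le q$ is positive) forces $\sum_{k=0}^q x_kU_{q-k,j^*}=0$ a.s. Here lies the essential difference from Proposition~\ref{prop:limcrit}: the entries $U_{q,j^*},\ldots,U_{0,j^*}$ come from $q+1$ independent rows and are therefore i.i.d., so this relation asserts that a linear combination of i.i.d.\ random variables is a.s.\ constant. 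Its variance equals $\var(U_{0j^*})\sum_k x_k^2$ (finite by $\ee U_{0j^*}^2<\infty$), and the non-degeneracy of $U_{0j^*}$ implied by $\pp(U_{0j^*}>0)>0$ forces $x=0$; thus $H(h)$ is strictly positive definite. I expect this final deduction --- converting a single almost-sure linear relation into $x=0$ via independence across rows (rather than the within-experiment lower-triangular system used in Proposition~\ref{prop:limcrit}) --- to be the crux of the argument, and the precise place where the $N\to\infty$ row-independence hypothesis earns its keep.

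Finally, since $\ii\ge0$ pointwise with equality exactly when its two arguments agree, $\ee\ii((T(h^*)U)_q||(T(h)U)_q)\ge0$ and equals $0$ at $h=h^*$; by the identity the full limit criterion is then minimised at $h^*$. Strict convexity on the convex effective domain, which contains a neighbourhood of $h^*$, upgrades this to a \emph{unique} minimiser, completing the proof.
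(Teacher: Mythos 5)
Your treatment of the first two assertions is correct and coincides with the paper's: the identity follows by substituting $Y_{qj}=(T(h^*)U)_{qj}\delta_{qj}$, splitting the logarithm, and using independence of $\delta$ and $U$ together with $\ee\delta_{qj}=1$ (this is exactly ``like the proof of Lemma~\ref{lemma:lc}''); and the equality of $\ee\ii((T(h^*)U)_i||(T(h)U)_i)$ for $i\geq q$ follows because these divergences depend only on the block of rows $U_{i-q},\ldots,U_i$, whose distribution does not depend on $i$. Your convexity step also begins correctly, and you rightly notice something the paper's one-line appeal to Proposition~\ref{prop:limcrit} glosses over: the lower-triangular system with $u_0$ on the diagonal is not available here, since only the single time index $q$ enters the criterion, so any identifiability argument must instead exploit independence of the entries $U_{0j},\ldots,U_{qj}$ across rows. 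That observation, and the resulting variance computation $\var\bigl(\sum_k x_kU_{q-k,j^*}\bigr)=\var(U_{0j^*})\sum_k x_k^2$, is the right replacement tool.

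The gap is in your last deduction: $\pp(U_{0j^*}>0)>0$ does \emph{not} imply that $U_{0j^*}$ is non-degenerate. It may be a.s.\ equal to a positive constant $c$, in which case $\var(U_{0j^*})=0$ and the a.s.\ relation $\sum_k x_kU_{q-k,j^*}=0$ only yields $c\sum_k x_k=0$, i.e.\ $\sum_k x_k=0$, not $x=0$. Moreover this is not a repairable slip under the stated hypotheses: if all inputs are a.s.\ constant, say $U_{ij}\equiv c_j>0$, then $(T(h)U)_{qj}=c_j\sum_k h_k$, so the limit criterion depends on $h$ only through $\sum_k h_k$; it is then convex but not strictly convex, and its minimizers form the whole hyperplane $\{h:\sum_k h_k=\sum_k h^*_k\}$ — even though every hypothesis of the proposition is satisfied. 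So your argument (and, latently, the paper's) needs a genuine non-degeneracy assumption, e.g.\ $\var(U_{0j})>0$ for some $j$, under which your variance step immediately gives $x=0$; alternatively, if $0<\pp(U_{0j}>0)<1$ for some $j$, one can argue on the positive-probability event that exactly one of $U_{0j},\ldots,U_{qj}$ is nonzero, which forces the coordinates of $x$ to vanish one at a time. With such an assumption added, the rest of your proof (minimality of $h^*$ from $\ii\geq 0$ with equality iff the arguments agree, plus uniqueness from strict convexity) is fine.
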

\begin{proof}
The proof of the first assertion is like the one of Lemma~\ref{lemma:lc}. The second assertion follows from the observation that for $i\geq q$ in the computation of the divergence, one needs $q+1$ inputs $U_{i},\ldots,U_{i-q}$ and these have identical distributions. Strict convexity and uniqueness are proved in the same way as for Proposition~\ref{prop:limcrit}.
\end{proof}
Minimizing the function $h\mapsto\ee\ii(Y_q||(T(h)U))_q)$ (referred to below as the limit criterion) is thus equivalent to minimizing $h\mapsto\ee\ii((T(h^*)U)_q||(T(h)U))_q)$.

As in the present case $m$ is fixed, we write $\hat h^N$ for the estimators. The following proposition describes the large sample behaviour of the $\hat h^N$ for $N\to\infty$.

\begin{proposition}\label{prop:consistent2}
Let Assumption~\ref{assumption:delta} be in force, in particular~\eqref{eq:delta}, and assume the rows $U_i$ form an i.i.d.\ sequence. Let $\pp(U_{0j}>0)>0$ for at least one $j$ and $\ee U_{0j}^2<\infty$ for all $j$, moreover assume that $h^*$ is an interior point.

The estimators $\hat{h}^N$, defined as the minimizers of the objective function $\sum_{i=0}^N\ii(Y_i||(T(h)U)_i)$ are consistent. Moreover, this sequence is asymptotically normal, for some positive definite $\Sigma\in\rr^{(q+1)\times (q+1)}$ we have $\sqrt{N}(\hat{h}^N-h^*)\stackrel{d}{\to} N(0,\Sigma)$.
\end{proposition}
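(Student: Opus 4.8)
The plan is to establish this via the standard M-estimation framework of van der Vaart, exactly as in the proof of Proposition~\ref{prop:consistent1}, replacing the column-wise law of large numbers by the row-wise one developed in Lemma~\ref{lemma:n}. First I would observe that the objective function $\sum_{i=0}^N\ii(Y_i||(T(h)U)_i)$ and its normalized version $I_N(h)=\frac{1}{N}\sum_{i=0}^N\ii(Y_i||(T(h)U)_i)$ have the same minimizer $\hat h^N$, so it suffices to work with $I_N$. By Lemma~\ref{lemma:n}, $I_N(h)$ converges almost surely to the limit criterion $h\mapsto\ee\ii(Y_q||(T(h)U)_q)$, and by Proposition~\ref{prop:limcrit1} this limit criterion is strictly convex on the set where it is finite and attains its unique minimum at $h=h^*$. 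These are precisely the ingredients required to invoke \cite[Theorem~5.7]{vandervaart} (via \cite[Problem~5.27]{vandervaart}), yielding consistency $\hat h^N\stackrel{\pp}{\to}h^*$.

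For asymptotic normality I would appeal to \cite[Theorem~5.23]{vandervaart}. The key analytic requirement is that the Hessian of the limit criterion at $h^*$ be strictly positive definite; this is exactly what the computation in the proof of Proposition~\ref{prop:limcrit1} (following the pattern of Proposition~\ref{prop:limcrit}) establishes under the assumptions $\pp(U_{0j}>0)>0$ for at least one $j$ and $\ee U_{0j}^2<\infty$, together with $h^*$ being interior. The gradient of each summand $\ii(Y_i||(T(h)U)_i)$, evaluated at $h^*$, provides the score contributions, and since $h^*$ is interior the first-order optimality conditions hold without boundary complications. The limiting covariance $\Sigma$ then arises in the usual sandwich form from the Hessian and the asymptotic variance of the normalized score.

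The main obstacle, and the genuine novelty relative to the $m\to\infty$ case, is that the summands in the row decomposition are \emph{not} independent: as noted in the text preceding Lemma~\ref{lemma:n}, row $Y_i$ and row $Y_{i+1}$ share common inputs, so a direct central limit theorem for i.i.d.\ terms does not apply to $\frac{1}{\sqrt N}\sum_i \nabla\ii(Y_i||(T(h^*)U)_i)$. The dependence is, however, of finite range: by the argument given before Lemma~\ref{lemma:n}, rows separated by more than $q$ indices use disjoint blocks of inputs and are therefore independent. Hence the score sequence is $(q+1)$-dependent (and stationary for $i\geq q$), so I would invoke a central limit theorem for $m$-dependent stationary sequences to obtain the Gaussian limit, with $\Sigma$ incorporating the finitely many nonzero lagged covariances rather than just the marginal variance. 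This is the step that most requires care: one must verify stationarity of the score increments for $i\geq q$ (which follows because any $q+1$ consecutive input rows are i.i.d.\ and hence identically distributed blocks), check the requisite moment condition (guaranteed by $\ee U_{0j}^2<\infty$ and the noise moment assumption $\ee\delta_{ij}|\log\delta_{ij}|<\infty$ controlling the score), and confirm that the boundary remainder of at most $q$ terms is asymptotically negligible after normalization by $\sqrt N$.
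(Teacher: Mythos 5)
Your proposal is correct and follows essentially the same route as the paper: consistency via Lemma~\ref{lemma:n}, Proposition~\ref{prop:limcrit1} and \cite[Problem~5.27, Theorem~5.7]{vandervaart}, and asymptotic normality by exploiting the finite-range ($q$-)dependence of the row-wise terms (due to overlapping inputs) combined with the M-estimation Taylor/sandwich argument of \cite[Theorem~5.23]{vandervaart}. The only minor difference is that you apply a CLT for $m$-dependent stationary sequences directly to the score terms, whereas the paper observes that $q$-dependence implies strong mixing and cites Ibragimov's CLT~\cite{ibragimov1975} applied to the criterion values $I_N(h)$; these are interchangeable here, and your version is arguably the more precise formulation of what the Taylor argument actually requires.
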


\begin{proof}
As the convolutions are not independent anymore, we cannot immediately follow the same path as in the proof of Proposition~\ref{prop:consistent1}. Still, the key to prove the result in the present case is the independence of the rows $U_i$ and that the $\delta_{ij}$ are independent.

Recall that a sequence of random variables or vectors $X_i$ is $q$-dependent if for every possible time index $t$ the (possibly infinite) sequences $(\ldots,X_{t-1},X_t)$ and $(X_{t+1+q}, X_{t+2+q},\ldots)$ are independent, and that a $q$-dependent sequence is automatically strong mixing. It follows from the assumptions that the $(Y_k,(T(U))_k)$ are $q$-dependent and so are the $\ii(Y_k||(T(h)U)_k)$, which then trivially become a strong mixing sequence. Hence, one can apply Ibragimov's central limit theorem~\cite{ibragimov1975} for strongly mixing stationary sequences to have $\sqrt{N}(I_N(h)-\ee I_N(h))$ converging to a zero mean normal distribution. The asymptotic normality result for the estimators follows by a Taylor argument for  M-estimators combined with the laws of large numbers and the CLT result for the $I_N(h)$ above (see~\cite[pages 51 and 72]{vandervaart}), or by application of the delta-method. See also \cite[Chapters 5 and 19]{vandervaart}, in particular the proofs of the general Theorems~5.21 and~5.23, and \cite[Section~5.6]{vandervaart} with results on the `classical case'. To verify the consistency condition in these theorems, one needs strict convexity and continuity of the limit criterion $h\mapsto\ee\ii(Y_q||(T(h)U))_q)$ and uniqueness of its minimizer, similar to Proposition~\ref{prop:limcrit}. From~\cite[Problem~5.27]{vandervaart} one concludes that the conditions of \cite[Theorem~5.7]{vandervaart} are satisfied and consistency follows.
\end{proof}

\subsection{Asymptotics for $N,m\to\infty$}\label{section:nmtoinfty}

In this section we study the large sample behavior of the estimators $h^{N,m}$ when both the time horizon $N$ and the number of experiments $m$ may tend to infinity.
The model is again \eqref{eq:delta} and next to Assumption~\ref{assumption:delta} in this section it is additionallly assumed that both the $U_{ij}$ and the $\delta_{ij}$ are i.i.d.\ arrays with the relevant expectations finite.

We look again at the limit criteria of Lemma~\ref{lemma:lc} and Lemma~\ref{lemma:n}.
The first limit criterion becomes $\sum_{i=0}^N\ee \ii(Y_{ij}||(T(h)U)_{ij})$, with $j$ arbitrary, which equals
\[
L^1_N:=\sum_{i=0}^{q-1}\ee \ii(Y_{ij}||(T(h)U)_{ij})+(N+1-q)\ee \ii(Y_{qj}||(T(h)U)_{qj})
\]
by the assumed identity in distribution. The second limit criterion we can write as $\sum_{j=1}^m\ee\ii(Y_{qj}||(T(h)U)_{qj})$, equal to
\[
L^2_m:=m\ee\ii(Y_{qj}||(T(h)U)_{qj})
\]
by the assumed independence for this case. We see that $\lim_{N\to\infty}\frac{1}{N}L^1_N=\lim_{m\to\infty}\frac{1}{m} L^2_m=\ee\ii(Y_{qj}||(T(h)U)_{qj})$, $j$ arbitrary, for instance $j=1$. This motivates the next result.

\begin{lemma}\label{lemma:nm}
Consider the random criterion function
\[
I_{N,m}(h)=\frac{1}{Nm}\sum_{i=0}^N\sum_{j=1}^m\ii(Y_{ij}||(T(h)U)_{ij}).
\]
Then one has, for  $N,m\to\infty$ the convergence in probability
\begin{equation}\label{eq:inm}
I_{N,m}(h)\to \ee\ii(Y_{q1}||(T(h)U)_{q1}),
\end{equation}
which has $h^*$ as its unique minimizer.
\end{lemma}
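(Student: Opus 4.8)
The plan is to establish \eqref{eq:inm} by the second-moment (Chebyshev) method, first pinning down the dependence structure of the summands and then controlling mean and variance separately; the minimizer statement will follow by quoting the convexity analysis already in place. Write $X_{ij}:=\ii(Y_{ij}||(T(h)U)_{ij})$, so that $I_{N,m}(h)=\frac{1}{Nm}\sum_{i=0}^N\sum_{j=1}^m X_{ij}$. The crucial observation is that $(T(h)U)_{ij}=\sum_{p=0}^{i\wedge q}h_pU_{i-p,j}$ and $Y_{ij}=(T(h^*)U)_{ij}\delta_{ij}$ depend only on the entries $U_{kj}$ ($k\le i$) of the $j$-th column together with the single noise variable $\delta_{ij}$. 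Since the arrays $(U_{ij})$ and $(\delta_{ij})$ are i.i.d.\ and mutually independent, this gives: (i) $X_{ij}$ and $X_{i'j'}$ are independent whenever $j\ne j'$; (ii) within a fixed column the sequence $(X_{ij})_i$ is $q$-dependent, exactly as in Section~\ref{section:ntoinfty}; and (iii) for $i\ge q$ the variable $X_{ij}$ has a distribution not depending on $i$ (or on $j$). Consequently $\cov(X_{ij},X_{i'j'})=0$ unless $j=j'$ and $|i-i'|\le q$.

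First I would compute the mean. By (iii) and identical distribution across columns, $\ee I_{N,m}(h)=\frac{1}{N}\sum_{i=0}^N\ee X_{i1}=\frac{1}{N}\Big(\sum_{i=0}^{q-1}\ee X_{i1}+(N+1-q)\,\ee X_{q1}\Big)$, which tends to $\ee X_{q1}=\ee\ii(Y_{q1}||(T(h)U)_{q1})$ as $N\to\infty$; the finiteness of the first moments here is exactly the decomposition already carried out in Lemma~\ref{lemma:lc} and Proposition~\ref{prop:limcrit1}.

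Next I would bound the variance. Column independence (i) yields $\var\big(I_{N,m}(h)\big)=\frac{1}{N^2m^2}\sum_{j=1}^m\var\big(\sum_{i=0}^N X_{ij}\big)=\frac{1}{N^2m}\var\big(\sum_{i=0}^N X_{i1}\big)$. Within the single column, $q$-dependence (ii) leaves only covariances with $|i-i'|\le q$, so $\var\big(\sum_{i=0}^N X_{i1}\big)\le (N+1)(2q+1)\,C$, where $C:=\max_{0\le i\le q}\var(X_{i1})<\infty$ (the maximum over all $i\ge 0$ reduces to this finite set by (iii)). Hence $\var\big(I_{N,m}(h)\big)\le \frac{(N+1)(2q+1)C}{N^2m}\to 0$ as $N,m\to\infty$, and Chebyshev's inequality together with the mean computation gives $I_{N,m}(h)\to\ee X_{q1}$ in probability, which is \eqref{eq:inm}.

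Finally, the limit $\ee\ii(Y_{q1}||(T(h)U)_{q1})$ is, up to the per-column constant, the limit criterion of Proposition~\ref{prop:limcrit1}: by the Hessian computation of Proposition~\ref{prop:limcrit} applied to a single column (using $\pp(U_{01}>0)>0$) it is strictly convex where finite, and Lemma~\ref{lemma:lc} locates its minimum at $h=h^*$, whence strict convexity makes $h^*$ the unique minimizer. The hard part is the hypothesis $C<\infty$, i.e.\ finiteness of the second moments $\ee X_{i1}^2$: since $X_{i1}$ involves the products $U\log U$, $U\delta$ and $U\delta\log\delta$, this requires the corresponding squared quantities to be integrable, and it is precisely these moment conditions that are intended by ``the relevant expectations finite'' in the statement.
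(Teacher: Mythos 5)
Your proof is correct and follows essentially the same route as the paper's: column independence plus $q$-dependence within columns to get a variance of order $Nm$, then Chebyshev's inequality for the convergence, and the additive decomposition (Lemma~\ref{lemma:lc}) together with the strict convexity argument of Proposition~\ref{prop:limcrit} for the uniqueness of the minimizer $h^*$. You merely spell out details the paper leaves implicit (the explicit mean computation with the vanishing boundary terms $i<q$, and the covariance counting behind ``variance of order $N$''), which is a welcome elaboration rather than a different approach.
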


\begin{proof}
For  each $j$ the random variables $\ii(Y_{ij}||(T(h)U)_{ij})$ are $q$-dependent and hence the variance of $\sum_{i=0}^N\ii(Y_{ij}||(T(h)U)_{ij})$ can be shown to be (finite and) of order $N$. As the latter sums are i.i.d.\ for different $j$ the result is that the variance of the double sum $\sum_{i=0}^N\sum_{j=1}^m\ii(Y_{ij}||(T(h)U)_{ij})$ is of order $Nm$. Hence, Chebychev's inequality gives the result on the convergence.

The minimizing property of $h^*$ follows as in the proof  of Proposition~\ref{prop:limcrit}, using the additive decomposition of the limit  in \eqref{eq:inm} into $(T(h^*)U)_{q1})||(T(h)U)_{q1})$ and a remainder term not involving $h$.
\end{proof}

\begin{remark}
Let $\rho_{N,m}=\frac{N}{m}$ and $\rho=\lim_{N,m\to\infty}\rho_{N,m}$ (assumed to exist). If $\rho=0$, the limit in Lemma~\ref{lemma:nm} coincides with the result for fixed $N$, if $\rho=\infty$ one retrieves the result of Lemma~\ref{lemma:n}, since under the present independence assumptions $\ii(Y_{q}||(T(h)U)_{q})=m\ii(Y_{q1}||(T(h)U)_{q1})$.
\end{remark}

\begin{proposition}\label{prop:consistent3}
Let Assumption~\ref{assumption:delta} be in force, in particular~\eqref{eq:delta}, and assume all $U_{ij}$ form an i.i.d.\ double array. Let $\pp(U_{ij}>0)>0$  and $\ee U_{ij}^2<\infty$ for all $i,j$, moreover assume that $h^*$ is an interior point. Let $N,m\to\infty$. The estimators $\hat{h}^{N,m}$, defined as the minimizers of the objective function $I_{N,m}(h)$ are consistent. Moreover, this sequence is asymptotically normal, for some positive definite $\Sigma\in\rr^{(q+1)\times (q+1)}$ we have $\sqrt{Nm}(\hat{h}^{N,m}-h^*)\stackrel{d}{\to} N(0,\Sigma)$.
\end{proposition}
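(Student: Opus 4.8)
The plan is to treat $\hat h^{N,m}$ by the standard theory of M-estimators, combining the two mechanisms already used in the marginal regimes: the convexity-based consistency argument of Proposition~\ref{prop:consistent1} and the mixing central limit theorem exploited in Proposition~\ref{prop:consistent2}. Lemma~\ref{lemma:nm} supplies the limit criterion $h\mapsto\ee\ii(Y_{q1}||(T(h)U)_{q1})$ together with the fact that $h^*$ is its unique minimizer. As in Proposition~\ref{prop:limcrit}, I would first record that this limit criterion is strictly convex where finite and that its Hessian $H(h^*)$ at $h^*$ is strictly positive definite: the rank-one computation of Proposition~\ref{prop:limcrit} specializes to a single row, giving $x^\top H(h^*)x=\ee[(\sum_k x_k U_{q-k,1})^2/(T(h^*)U)_{q1}]$, which vanishes only if $\sum_k x_k U_{q-k,1}=0$ a.s.; since the inputs $U_{0,1},\ldots,U_{q,1}$ are i.i.d.\ and nondegenerate, this forces $x=0$.

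For consistency, note that $I_{N,m}(h)$ is convex in $h$ for every realization and, by Lemma~\ref{lemma:nm}, converges in probability to the strictly convex limit criterion whose unique minimizer is $h^*$. Pointwise convergence of convex functions to a convex limit upgrades automatically to local uniform convergence, so exactly as in Proposition~\ref{prop:consistent1} one concludes from~\cite[Problem~5.27]{vandervaart} that the conditions of \cite[Theorem~5.7]{vandervaart} hold, whence $\hat h^{N,m}\to h^*$ in probability. Because $h^*$ is an interior point, the estimator eventually solves the stationarity equation $\nabla I_{N,m}(\hat h^{N,m})=0$.

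For asymptotic normality I would expand the score around $h^*$, obtaining $\sqrt{Nm}(\hat h^{N,m}-h^*)=-H_{N,m}^{-1}\sqrt{Nm}\,\nabla I_{N,m}(h^*)$ with $H_{N,m}$ the Hessian at an intermediate point. By consistency together with a law of large numbers for the Hessian (again a double-array average of terms that are $q$-dependent across $i$ and independent across $j$, handled as in Lemma~\ref{lemma:nm}) one has $H_{N,m}\to H(h^*)$ in probability, and $H(h^*)$ is invertible by the first paragraph. It therefore suffices to prove a central limit theorem for the normalized score $\frac{1}{\sqrt{Nm}}\sum_{i,j}\xi_{ij}$, where $\xi_{ij}$ denotes $\nabla_h\ii(Y_{ij}||(T(h)U)_{ij})$ evaluated at $h^*$. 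A direct differentiation gives $\xi_{ij,k}=U_{i-k,j}(1-\delta_{ij})$, which has mean zero since $\ee\delta_{ij}=1$ and $U$ is independent of $\delta$; for each fixed $j$ the sequence $(\xi_{ij})_i$ is stationary for $i\geq q$ and $q$-dependent, while distinct columns are independent. A short computation shows moreover that the cross-lag covariances vanish, because the centred factors $1-\delta_{ij}$ are independent across $i$, so the long-run variance reduces to $V=\var(\xi_{q1})$.

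The genuine obstacle is the central limit theorem for this double-indexed array as $N,m\to\infty$ simultaneously, since $q$-dependence governs one index and independence the other. By the Cram\'er--Wold device it is enough to treat $a^\top\xi_{ij}$ for a fixed vector $a$. Grouping by columns, write the sum as $m^{-1/2}\sum_{j=1}^m T_j^{(N)}$ with $T_j^{(N)}=N^{-1/2}\sum_{i=0}^N a^\top\xi_{ij}$; for each fixed $N$ these are i.i.d.\ in $j$, while the $q$-dependent central limit theorem used in Proposition~\ref{prop:consistent2} (Ibragimov~\cite{ibragimov1975}) yields $T_1^{(N)}\stackrel{d}{\to}N(0,\sigma^2)$ as $N\to\infty$, with $\sigma^2=a^\top V a$. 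I would then apply the Lindeberg--Feller theorem to the triangular array $\{m^{-1/2}T_j^{(N)}\}_{j=1}^m$: the variance condition is $\var(T_1^{(N)})\to\sigma^2$, an elementary computation for a $q$-dependent stationary sequence needing only second moments, while the Lindeberg condition $\ee[(T_1^{(N)})^2\mathbf{1}_{\{|T_1^{(N)}|>\eps\sqrt{m}\}}]\to0$ follows from uniform integrability of $\{(T_1^{(N)})^2\}_N$. The crucial point, and the place where the interchange of the two limits is actually controlled, is that this uniform integrability comes for free: the convergence in distribution $T_1^{(N)}\stackrel{d}{\to}N(0,\sigma^2)$ together with the convergence of variances $\var(T_1^{(N)})\to\sigma^2$ is exactly the moment-convergence criterion for uniform integrability, so no moment of order higher than two and no rate relation between $N$ and $m$ are required. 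This delivers $\sqrt{Nm}\,\nabla I_{N,m}(h^*)\stackrel{d}{\to}N(0,V)$, and hence $\sqrt{Nm}(\hat h^{N,m}-h^*)\stackrel{d}{\to}N(0,\Sigma)$ with $\Sigma=H(h^*)^{-1}VH(h^*)^{-1}$.
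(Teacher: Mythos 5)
Your proposal is correct and follows the same overall M-estimation strategy as the paper's proof, but it supplies the one piece of mathematics the paper leaves entirely implicit. The paper's own proof is a two-line sketch: consistency from Lemma~\ref{lemma:nm}, and ``the remainder follows as in the proof of Proposition~\ref{prop:consistent2}, using for any fixed $j$ the $q$-dependence of the $Y_{ij}$ and the independence across $j$'' --- it never explains how the two marginal limit mechanisms are to be combined when $N$ and $m$ tend to infinity simultaneously. Your construction does exactly that: writing $\sqrt{Nm}\,\nabla I_{N,m}(h^*)$ as $m^{-1/2}\sum_{j=1}^m T_j^{(N)}$ with $T_j^{(N)}=N^{-1/2}\sum_i a^\top\xi_{ij}$, using Ibragimov's theorem within a column to get $T_1^{(N)}\stackrel{d}{\to}N(0,\sigma^2)$, and then the Lindeberg--Feller theorem across the row-wise i.i.d.\ triangular array. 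The key observation --- that the Lindeberg condition follows from uniform integrability of $\{(T_1^{(N)})^2\}$, which in turn is free of charge from convergence in distribution plus convergence of variances --- is what makes the argument work with only second moments and with no rate relation between $N$ and $m$; this is precisely the claim (``only the product $Nm$ is large'') that the paper asserts but does not prove. Your explicit score computation $\xi_{ij,k}=U_{i-k,j}(1-\delta_{ij})$ and the resulting vanishing of cross-lag covariances (so that the long-run variance collapses to $\var(\xi_{q1})$) is a further simplification, specific to the multiplicative-noise model, that the paper does not exploit.

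One caveat: in the positive-definiteness step you quietly assume the inputs are ``nondegenerate,'' which is not among the stated hypotheses --- $\pp(U_{ij}>0)>0$ and $\ee U_{ij}^2<\infty$ permit $U_{ij}\equiv c>0$. The assumption is in fact necessary: for constant inputs the single-row limit criterion $\ee\ii(Y_{q1}||(T(h)U)_{q1})$ depends on $h$ only through $\sum_k h_k$, so its minimizer is not unique, and the proposition (and the paper's Lemma~\ref{lemma:nm}) fails as stated; this is a gap in the paper, not in your argument. Note also that your variance argument (a.s.\ vanishing of $\sum_k x_kU_{q-k,1}$ forces $x=0$ when $\var U_{ij}>0$) is the correct replacement for the triangular-system argument of Proposition~\ref{prop:limcrit}, which is unavailable here because only one row enters the limit criterion; you should simply state the nondegeneracy hypothesis explicitly.
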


\begin{proof}
For consistency one needs Lemma~\ref{lemma:nm} and uniqueness of the minimizer of the expectation in~\eqref{eq:inm}. The remainder follows as in the proof of Proposition~\ref{prop:consistent2}, using for any fixed $j$ the $q$-dependence of the $Y_{ij}$, $i\geq 0$ and the independence, for fixed $i$ of the $Y_{ij}$, $j\geq 1$. \end{proof}

\subsection{Misspecified models}

The standing assumption in this section until now was Assumption~\ref{assumption:delta} that postulated the existence of a `true' parameter $h^*$. In absence of this assumption we have the following counterpart of Proposition~\ref{prop:limcrit} under the conditions of Section~\ref{sec:mtoinfty}. Similar results holds for the situations of Sections~\ref{section:ntoinfty} and~\ref{section:nmtoinfty}.

\begin{proposition}\label{prop:limcritmis}
Let $\pp(u_0>0)=1$ and $\ee u_j^2<\infty$ for all $j$. The limit criterion $h\mapsto\ee\ii(y||T(h)u)$ is strictly convex on the set where it is finite and has a unique minimum. The unique minimizer coincides with $h^*$ when Assumption~\ref{assumption:delta} holds.
\end{proposition}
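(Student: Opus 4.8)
The plan is to follow the proof of Proposition~\ref{prop:limcrit} as closely as possible, the only conceptual change being that the positive weight $(T(h^*)u)_j$ appearing there (which arose because $\ee[y_j\mid u]=(T(h^*)u)_j$ under Assumption~\ref{assumption:delta}) is now replaced by the raw output $y_j$. First I would record that convexity comes for free: writing
\[
\ii(y\|T(h)u)=\sum_i\big(y_i\log y_i-y_i\big)+\sum_i\big(-y_i\log(T(h)u)_i+(T(h)u)_i\big),
\]
the first sum does not depend on $h$, while in the second sum each map $h\mapsto-y_i\log(T(h)u)_i$ is convex (the concave $\log$ composed with the linear $h\mapsto(T(h)u)_i$) and $h\mapsto(T(h)u)_i$ is linear; taking expectations preserves convexity, so $F(h):=\ee\ii(y\|T(h)u)$ is convex on its convex effective domain $D=\{h\in\ch:F(h)<\infty\}$.

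For strict convexity I would compute the Hessian on $D$ exactly as in Proposition~\ref{prop:limcrit}, obtaining
\[
H(h)_{kl}=\ee\sum_{j=0}^N\frac{y_j}{(T(h)u)_j^2}\,u_{j-k}u_{j-l},\qquad x^\top H(h)x=\ee\sum_{j=0}^N\frac{y_j}{(T(h)u)_j^2}\,(u*x)_j^2,
\]
with $(u*x)_j=\sum_k x_ku_{j-k}$. Vanishing of this nonnegative expectation forces $\frac{y_j}{(T(h)u)_j^2}(u*x)_j^2=0$ a.s.\ for every $j$, hence $(u*x)_j=0$ a.s.\ on $\{y_j>0\}$ for $j=0,\ldots,q$. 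Invoking $\pp(u_0>0)=1$, the lower-triangular structure with diagonal entries $u_0$ (as in Proposition~\ref{prop:limcrit} and Lemma~\ref{lemma:fconvex}) then yields $x=0$, so $H(h)\succ0$ and $F$ is strictly convex on $D$.

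The delicate point, and the step I expect to be the main obstacle, is precisely the passage from ``$(u*x)_j=0$ a.s.\ on $\{y_j>0\}$'' to ``$(u*x)_j=0$ a.s.'': it requires the weight $y_j$ to be strictly positive on a set of positive probability for each $j\le q$, which is exactly what the everywhere-positive factor $(T(h^*)u)_j\ge h^*_ju_0>0$ supplied for free in Proposition~\ref{prop:limcrit}. I would handle this by treating $\pp(y_j>0)>0$ for $j\le q$ as the natural output analogue of Condition~\ref{cond:sc}, which holds under the observation model of Section~\ref{sec:mtoinfty}; in particular, whenever Assumption~\ref{assumption:delta} is in force one has $y_j=(T(h^*)u)_j\delta_j$ with $(T(h^*)u)_j>0$ a.s.\ and $\pp(\delta_j>0)>0$ (since $\ee\delta_j=1$), so $\pp(y_j>0)>0$ automatically.

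Finally, for existence of the minimum I would argue coercivity: the $h$-linear part $\ee\sum_i(T(h)u)_i=\sum_p\big(\sum_{k=0}^{N-p}\ee u_k\big)h_p\ge(\ee u_0)\|h\|_1$ grows without bound on $\ch$ (as $\ee u_0>0$), while the $-\ee y_j\log(T(h)u)_j$ terms grow only logarithmically, so $F(h)\to\infty$ as $\|h\|\to\infty$; combined with convexity and lower semicontinuity this confines the minimiser to a compact set exactly as in Proposition~\ref{proposition:minh}, giving existence, and strict convexity gives uniqueness. The last assertion is then immediate: under Assumption~\ref{assumption:delta}, Lemma~\ref{lemma:lc} shows that $F$ differs from $h\mapsto\ee\ii(T(h^*)u\|T(h)u)$ by an additive constant independent of $h$, and the latter is nonnegative and vanishes at $h=h^*$, so by the uniqueness just established the minimiser coincides with $h^*$.
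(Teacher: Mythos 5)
Your proposal is correct and follows essentially the same route as the paper: the paper's entire proof consists of repeating the Hessian computation of Proposition~\ref{prop:limcrit} with the weights $(T(h^*)u)_j$ replaced by $y_j$, concluding positive definiteness and hence strict convexity and a unique minimum, and then invoking Lemma~\ref{lemma:lc} to identify the minimizer with $h^*$ under Assumption~\ref{assumption:delta} --- exactly your argument. The one place where you go beyond the paper is the non-degeneracy point, and you are right to flag it: the paper's phrase ``the Hessian is then again seen to be positive definite'' silently uses what you make explicit, namely $\pp(y_j>0)>0$ for $j\le q$, which is automatic under Assumption~\ref{assumption:delta} but is a genuine implicit hypothesis in the misspecified case (if, say, $y\equiv 0$ a.s., the criterion is linear in $h$ and not strictly convex). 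One small refinement to your write-up: since the events $\{y_j>0\}$ need not overlap, the triangular system cannot be solved on a single event as in Proposition~\ref{prop:limcrit}; instead one argues sequentially, using that $x$ is deterministic --- the $j=0$ equation on $\{y_0>0\}$ gives $u_0x_0=0$ a.s.\ there, hence $x_0=0$; the $j=1$ equation on $\{y_1>0\}$ then reduces to $u_0x_1=0$, hence $x_1=0$; and so on up to $j=q$. Your coercivity argument for existence is also fine, and is a concrete substitute for the paper's unelaborated ``the existence of a unique minimum follows''.
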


\begin{proof}
The proof follows the lines of the proof of Proposition~\ref{prop:limcrit}, but in the computation of the Hessian one has to replace the quantities $(T(h^*)u)_j$ with $Y_j$. The Hessian is then again seen to be positive definite, and the existence of a unique minimum follows.

That this minimizer coincides with $h^*$ under Assumption~\ref{assumption:delta}, follows from Lemma~\ref{lemma:lc}. 
\end{proof}
Calling the unique minimizer $h^*$, one obtains that all previous results on consistency --under the presented conditions-- continue to hold with the `true' parameter replaced with this $h^*$. The same is true for the results for asymptotic normality. See Example~5.25 in \cite{vandervaart} for a similar discussion on maximum likelihood estimation for misspecified models. In that situation the `true' parameter is replaced by the one that minimizes the Kullback-Leibler information between the distribution of the data and the distribution given by the misspecified model. The analogy with our setting is obvious.

\section{Numerical experiments}\label{section:numerics}

In this section we provide the results of a number of numerical experiments that illustrate the behaviour of Algorithm~\ref{algorithm:h}. All figures can be found at the end of the paper.
We have observed experimentally that usually the iterative algorithm converges very fast in many instances, which is illustrated by the examples. In many cases 50 iterations would have sufficed. For the sake of graph readability in the examples reproduced here the order $q$ has been limited to 5, leading to a parameter vector $h$ of length 6. Each of the graphs shows the iterates $h^t_k$ ($k=0,\dots,5$)  with the iteration number $t$ on the horizontal axis, and the 6 values of the impulse response $h_k^t$ on the vertical axis, different colors representing the different $k$'s. As another simplification in the graphs we sometimes omit the first iterates.
In Figures~\ref{fig:true1}--\ref{fig:noisy4}
the diamonds at the right end of the graph indicate the true $h^*$ target values. In all cases the $U_{ij}$ are generated as independent uniform $U(0.1,10)$ random variables. The precise features underlying the different experiments are further detailed  below. The different experiments highlight the role of the parameters $m$ and $N$, especially when the system is observed with noise. Relatively small values of $m$ compared to high values of $N$ give satisfactory results. For the asymptotics of Proposition~\ref{prop:consistent3} it is important that only the product $Nm$ is large.

In the first two examples, Figure~\ref{fig:true1} and Figure~\ref{fig:true2}, we investigate whether the algorithm is capable of retrieving the \emph{true} parameter vector $h^*$, when the output data are actually generated by $h^*$. After that we investigate the behavior of the algorithm when we have noisy observations of the output.
Here we are in the statistical setting of Section~\ref{section:stats}. The $\delta_{ij}$ are taken as $\exp(Z_{ij}/10-1/200)$, where the $Z_{ij}$ are independent standard normal random variables. Note that indeed $\ee \delta_{ij}=1$. Figures~\ref{fig:noisy1}--\ref{fig:noisy4} illustrate the large sample behaviour of the estimators. We see that for not too large values of $N$, already moderate values of $m$ give good results, this illustrates Proposition~\ref{prop:consistent3}. For small values of $m$, e.g.\ $m=1$, one needs a relatively large value of $N$ to have satisfactory results. This is probably partly due to the dependence between rows of $Y$.
In the last examples the input/output relation generating the outputs is that of an arbitrary positive system. In this case the $h$ generated by the algorithm is the impulse response of the best convolutional system approximation to the given system. Figures~\ref{fig:arb1} and~\ref{fig:arb2} also illustrate Remark~\ref{remark:boundary} on boundary solutions.

\section{Conclusions}

We posed the nonparametric  approximation problem for scalar nonnegative input/output systems via impulse response convolutions of finite order, based on multiple observations of input/output signal pairs. The problem is  converted into a nonnegative matrix factorization with special structure for which we used Csisz\'ar's I-divergence as the criterion of optimality. Conditions have been given that guarantee the existence and uniqueness of the minimum. An algorithm whose iterates converge to the unique minimizer has been presented. For the case of noisy observations of a true system we also proved the consistency of the parameter estimators under different large sample regimes (many observation times, many inputs, or a mix of these). Numerical experiments confirm the asymptotic results and often exhibit fast convergence to the  minimizer of the objective function.


\bibliographystyle{plain}
%


\newpage

\begin{figure}
\begin{center}
	\includegraphics[width=0.9\textwidth]{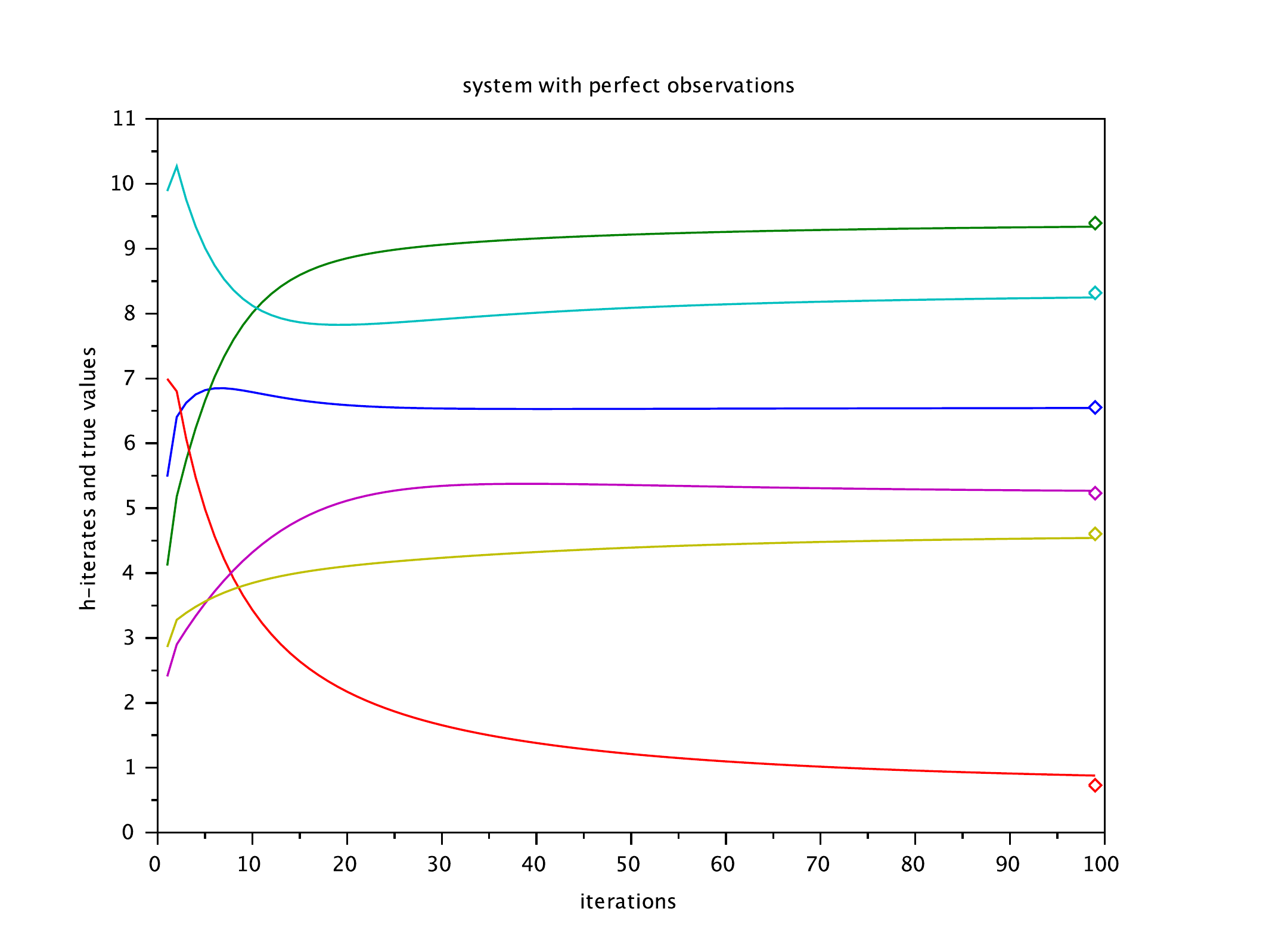}
	\vspace{-3ex}
		\caption{{\rm noiseless observations, $m=5$, $N=10$}
		\label{fig:true1}}
\end{center}
\end{figure}

\begin{figure}
\begin{center}
	\includegraphics[width=0.9\textwidth]{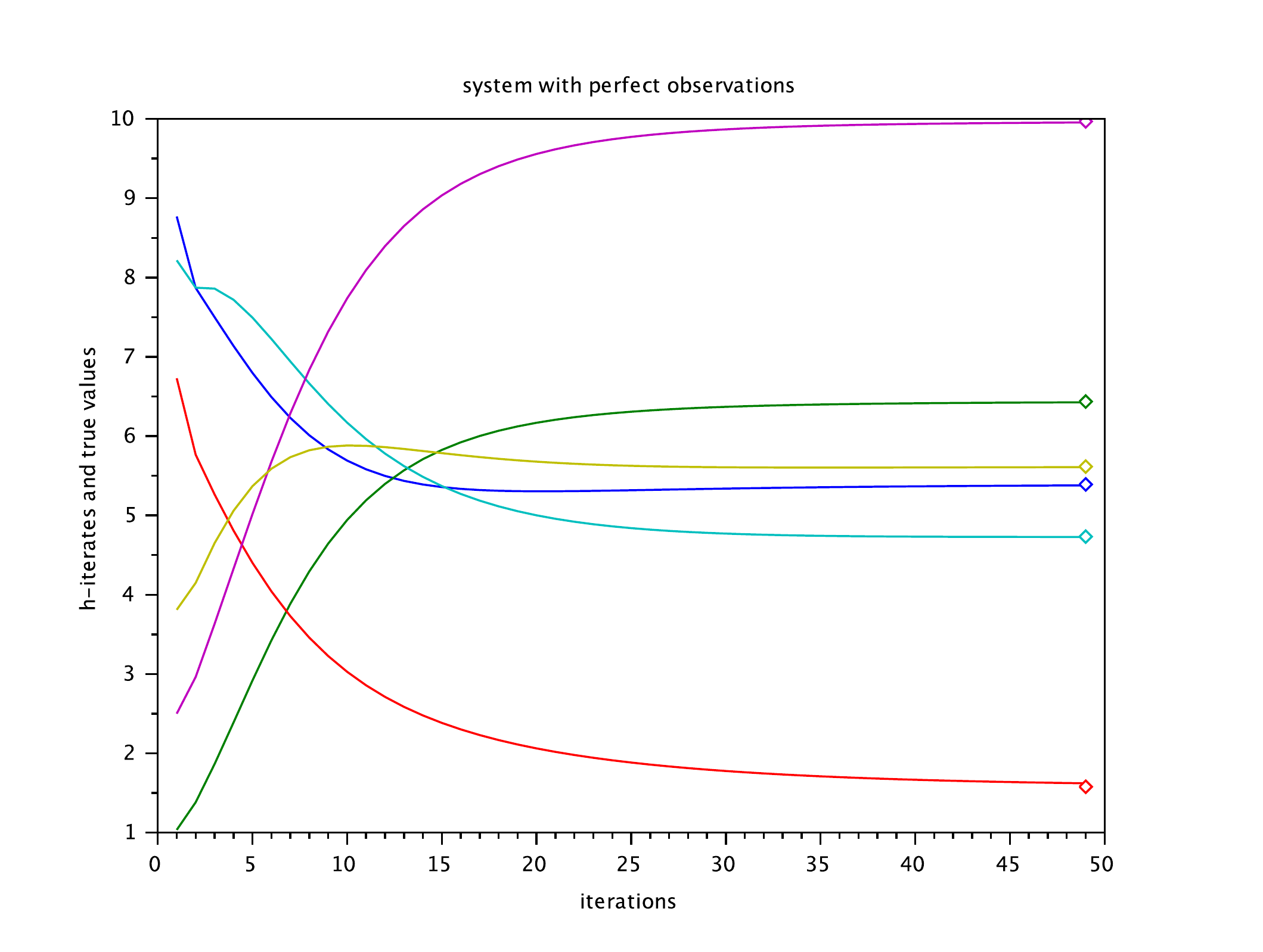}
	\vspace{-3ex}
		\caption{{\rm noiseless observations, $m=10$, $N=5$}
		\label{fig:true2}}
\end{center}
\end{figure}

%


\begin{figure}
\begin{center}
	\includegraphics[width=0.9\textwidth]{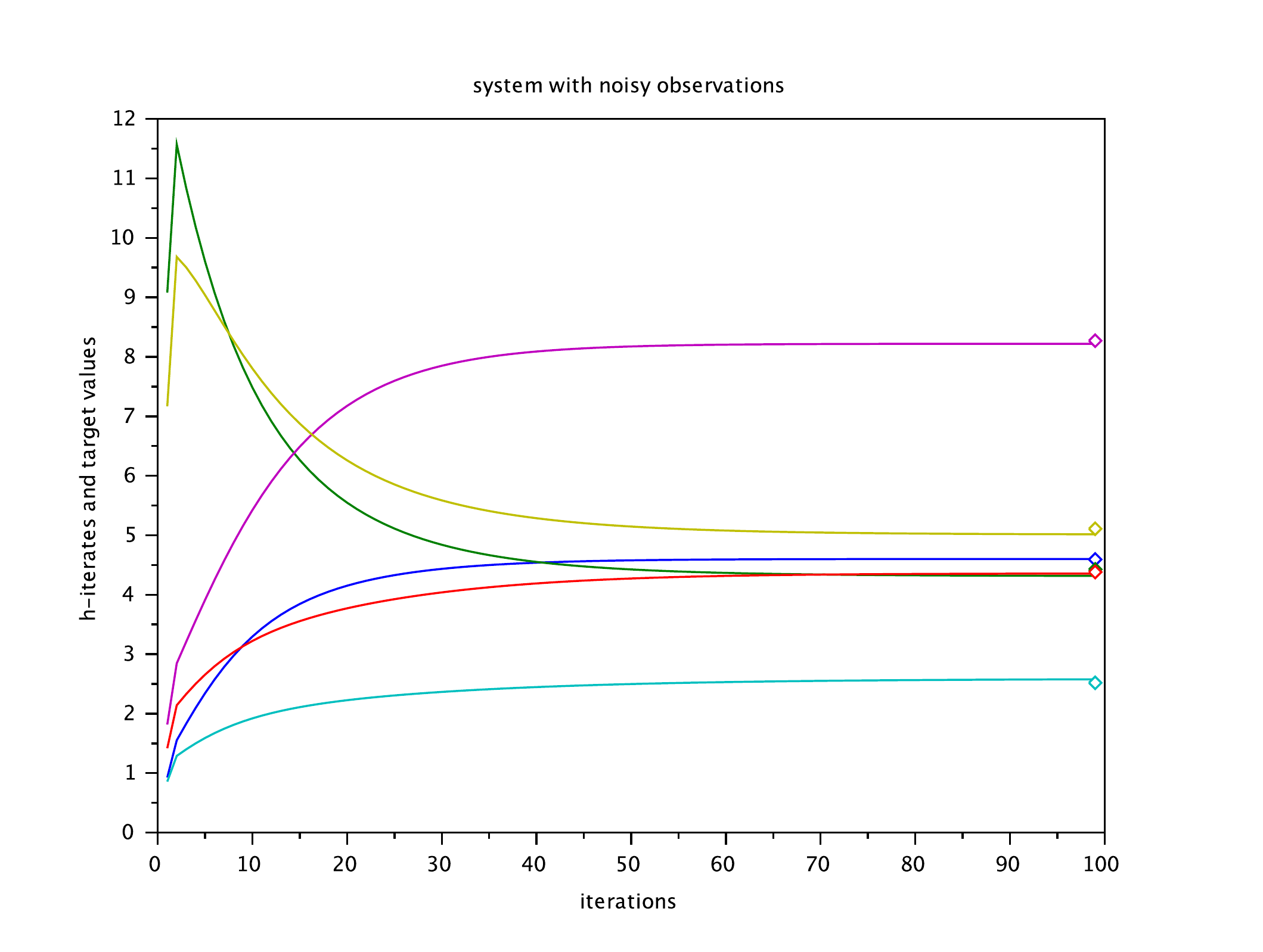}
	\vspace{-3ex}
		\caption{{\rm noisy observations, $m=30$, $N=20$}
		\label{fig:noisy1}}
\end{center}
\end{figure}

\begin{figure}
\begin{center}
	\includegraphics[width=0.9\textwidth]{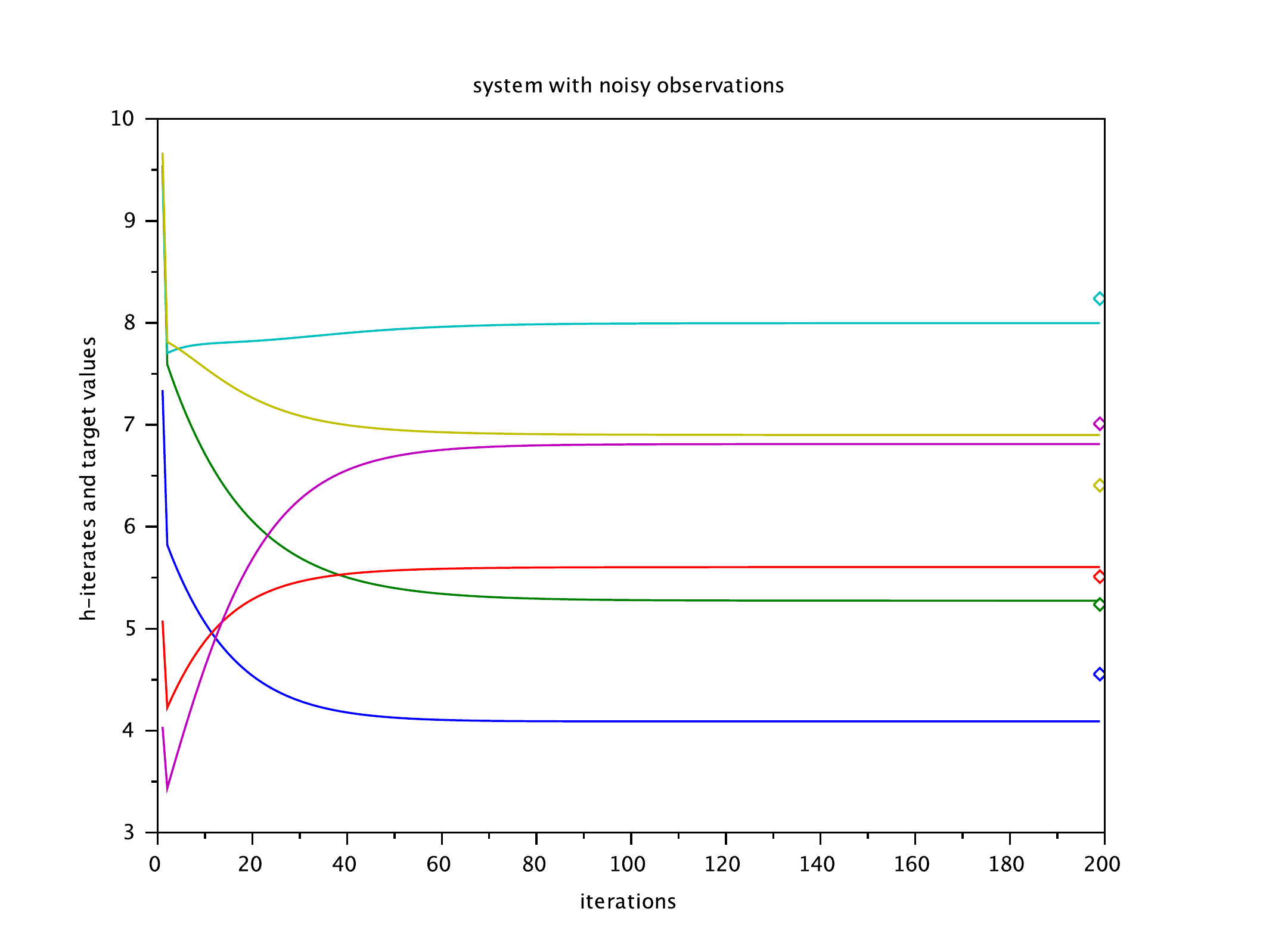}
	\vspace{-3ex}
		\caption{{\rm noisy observations, $m=1$, $N=100$}
		\label{fig:noisy2}}
\end{center}
\end{figure}

\begin{figure}
\begin{center}
	\includegraphics[width=0.9\textwidth]{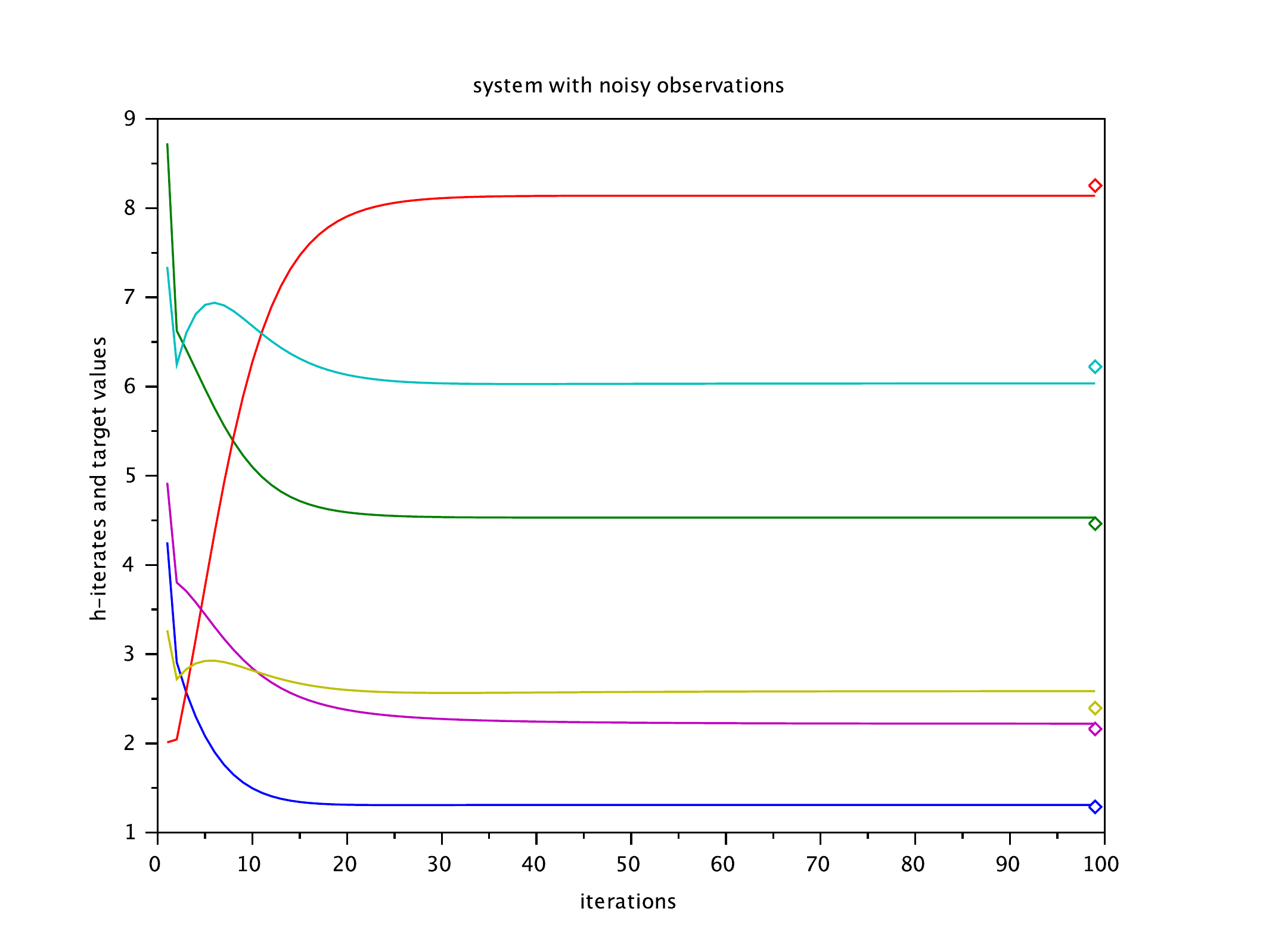}
	\vspace{-3ex}
		\caption{{\rm noisy observations, $m=30$, $N=5$}
		\label{fig:noisy3}}
\end{center}
\end{figure}

\begin{figure}
\begin{center}
	\includegraphics[width=0.9\textwidth]{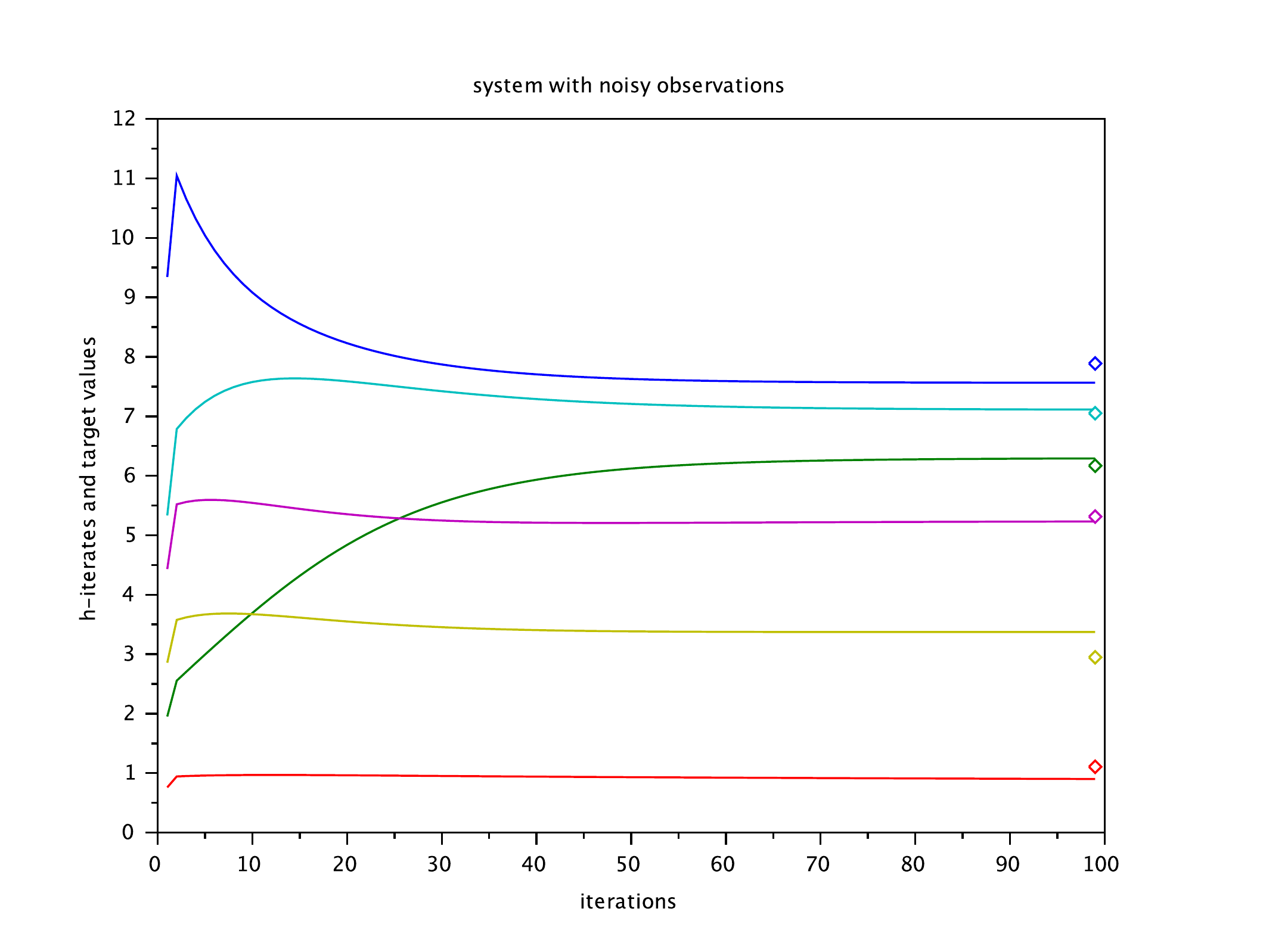}
	\vspace{-3ex}
		\caption{{\rm noisy observations, $m=5$, $N=30$}
		\label{fig:noisy4}}
\end{center}
\end{figure}

\begin{figure}
\begin{center}
	\includegraphics[width=0.9\textwidth]{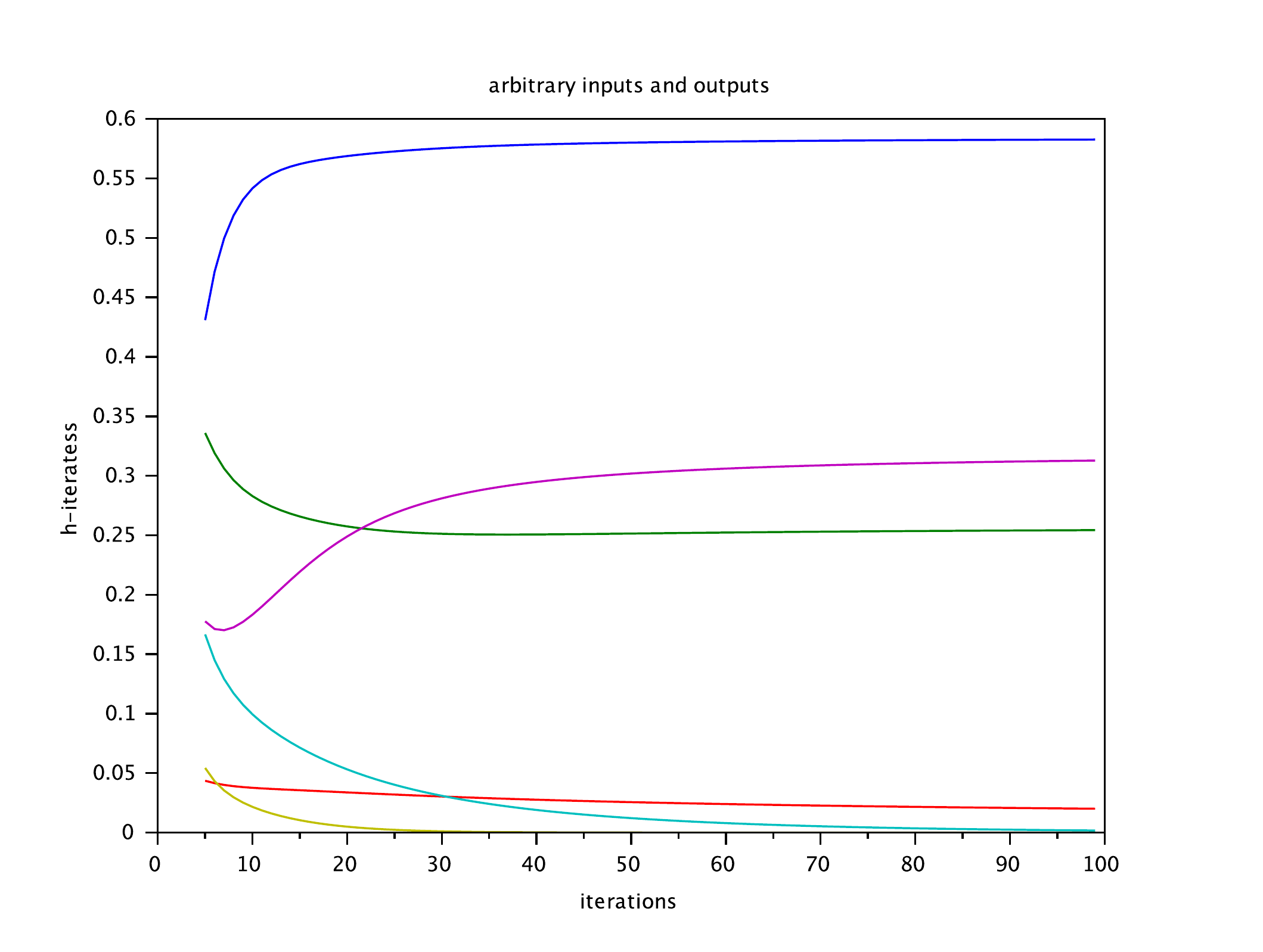}
	\vspace{-3ex}
		\caption{{\rm arbitrary system, $m=10$, $N=8$}
		\label{fig:arb1}}
\end{center}
\end{figure}

\begin{figure}
\begin{center}
	\includegraphics[width=0.9\textwidth]{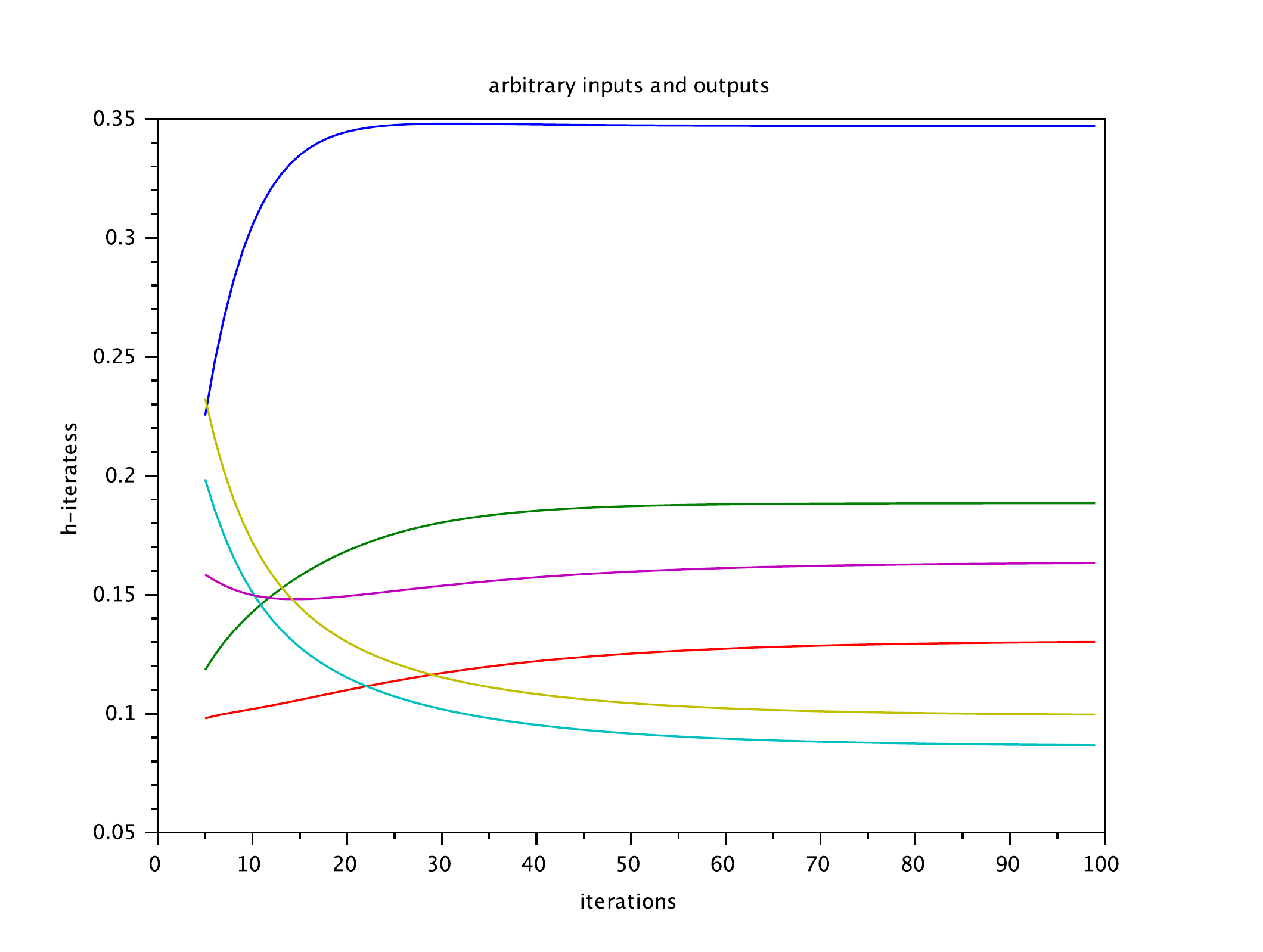}
	\vspace{-3ex}
		\caption{{\rm arbitrary system, $m=10$, $N=50$}
		\label{fig:arb2}}
\end{center}
\end{figure}

\end{document}